\pgfplotsset{compat=newest} 
\newcolumntype{L}[1]{>{\raggedright\let\newline\\\arraybackslash\hspace{0pt}}m{#1}}
\newcolumntype{C}[1]{>{\centering\let\newline\\\arraybackslash\hspace{0pt}}m{#1}}
\newcolumntype{R}[1]{>{\raggedleft\let\newline\\\arraybackslash\hspace{0pt}}m{#1}}
\newtheorem{theorem}{Theorem}
\newtheorem{proposition}[theorem]{Proposition}
\theoremstyle{definition}
\newtheorem{lemma}[theorem]{Lemma}
\theoremstyle{remark}
\newtheorem{remark}[theorem]{Remark}
\Crefname{assumption}{Assumption}{Assumptions}
\numberwithin{theorem}{section}
\numberwithin{equation}{section}
\numberwithin{table}{section}
\numberwithin{figure}{section}
\definecolor{myBlue}{RGB}{30,144,255} 
\definecolor{myGreen}{RGB}{69,169,0} 
\definecolor{myRed}{RGB}{165,12,42} 
\definecolor{myOrange}{RGB}{225,92,22} 
\definecolor{color0}{rgb}{0.12156862745098,0.466666666666667,0.705882352941177}
\definecolor{color1}{rgb}{1,0.498039215686275,0.0549019607843137}
\definecolor{color2}{rgb}{0.172549019607843,0.627450980392157,0.172549019607843}
\definecolor{color3}{rgb}{0.83921568627451,0.152941176470588,0.156862745098039}
\definecolor{color4}{rgb}{0.580392156862745,0.403921568627451,0.741176470588235}
\definecolor{color5}{rgb}{0,0,0}
\newcommand{\delete}[1]{ }
\def\N{\mathbb{N}}
\def\R{\mathbb{R}}
\newcommand\ds{\,\text{d}s}
\newcommand\dx{\,\text{d}x}
\newcommand{\tddt}{\ensuremath{\tfrac{\text{d}}{\text{d}t}} }
\DeclareMathOperator{\id}{id}
\DeclareMathOperator{\trace}{trace}
\newcommand{\calA}{\ensuremath{\mathcal{A}} }
\newcommand{\calB}{\ensuremath{\mathcal{B}} }
\newcommand{\calBB}{\ensuremath{\widetilde{\mathcal{B}}} }
\newcommand{\calC}{\ensuremath{\mathcal{C}} }
\newcommand{\calCC}{\ensuremath{\widetilde{\mathcal{C}}} }
\newcommand{\calD}{\ensuremath{\mathcal{D}} }
\newcommand{\cHV}{\ensuremath{{\mathcal{H}_{\scalebox{.5}{\V}}}}}
\newcommand{\cHQ}{\ensuremath{{\mathcal{H}_{\scalebox{.5}{\Q}}}}}
\newcommand{\cHQdual}{\ensuremath{{\mathcal{H}^*_{\scalebox{.5}{\Q}}}}}
\newcommand{\calO}{\ensuremath{\mathcal{O}} }
\newcommand{\Q}{\ensuremath{\mathcal{Q}} }
\newcommand{\V}{\ensuremath{\mathcal{V}}}
\newcommand{\h}{\ensuremath{r}}
\newcommand{\shift}[2]{\Delta_{#1}{#2}}
\newcommand{\bdf}{\mathrm{BDF}_2}
\newcommand{\corr}[1]{{\color{color3}{#1}}} 
\begin{document}
\title[Semi-explicit integration of second order for poroelasticity]{Semi-explicit integration of second order\\ for weakly coupled poroelasticity} 
\author[]{R.~Altmann$^\dagger$, R.~Maier$^\ddagger$, B.~Unger$^\star$}
\address{${}^{\dagger}$ Institute of Analysis and Numerics, Otto von Guericke University Magdeburg, Universit\"atsplatz 2, 39106 Magdeburg, Germany}
\email{robert.altmann@ovgu.de}
\address{${}^{\ddagger}$ Institute for Applied and Numerical Mathematics, Karlsruhe Institute of Technology, Englerstr.~2, 76131 Karlsruhe, Germany}
\email{roland.maier@kit.edu}
\address{${}^{\star}$ Stuttgart Center for Simulation Science (SC SimTech), University of Stuttgart, Universit\"{a}tsstr.~32, 70569 Stuttgart, Germany}
\email{benjamin.unger@simtech.uni-stuttgart.de}
%
\date{\today}
\keywords{}
%
%
\begin{abstract}
We introduce a semi-explicit time-stepping scheme of second order for linear poroelasticity satisfying a weak coupling condition. Here, semi-explicit means that the system, which needs to be solved in each step, decouples and hence improves the computational efficiency. The construction and the convergence proof are based on the connection to a differential equation with two time delays, namely one and two times the step size. Numerical experiments confirm the theoretical results and indicate the applicability to higher-order schemes. 
\end{abstract}
%
%
\maketitle
%
{\tiny \textbf{Key words.} poroelasticity, elliptic--parabolic problem, semi-explicit time discretization, delay, backward differentiation formula}\\
\indent
{\tiny \textbf{AMS subject classifications.}  \textbf{65M12}, \textbf{65L80}, \textbf{76S05}} 
%
%
%
\section{Introduction}
This paper is devoted to the construction and analysis of a semi-explicit time discretization scheme of second order for linear poroelasticity~\cite{DetC93, Sho00}. The poroelastic equations can be characterized as a coupled system consisting of an elliptic and a parabolic equation and appear, e.g., in the field of geomechanics~\cite{Biot41, Zob10}. In many applications, this coupling is rather weak in a certain sense (cf.~\eqref{eq:coupling1} and~\eqref{eqn:weakCouplingCondition} below as well as the typical poroelastic parameters stated in~\cite[p.~25]{DetC93}), which is also a central assumption in this paper to guarantee convergence. 
%
For the temporal discretization of elliptic--parabolic problems such as poroelasticity, one mainly considers \emph{implicit} schemes such as the implicit Euler method~\cite{ErnM09} or higher-order schemes~\cite{Fu19}. This is primarily due to the fact that a semi-discretization in space yields a differential--algebraic equation for which \emph{explicit} time-stepping schemes cannot be used~\cite{KunM06}.  

Then again, one is interested in a decoupled approach, in which the elliptic and parabolic equations can be solved sequentially. Such a decoupling does not only replace the solution of a large system by two smaller subsystems to be solved but also enables the application of standard preconditioners~\cite{LMW17}. Moreover, the decoupling of the systems favors a co-design paradigm, allowing the usage of highly optimized software packages for the porous media flow (the parabolic equation) and the mechanical problem (the elliptic equation) separately, and, in addition, includes a linearization step if the permeability depends on the displacement, cf.~\cite{AltM22}.
One attempt in this direction are iterative decoupling methods such as the \emph{fixed-stress}, \emph{fixed-strain}, or \emph{drained} splitting schemes; see, e.g.,~\cite{ArmS92,WheG07,KimTJ11a,MikW13}. These schemes come along with an additional inner iteration in each time step that is required to guarantee convergence~\cite{StoBKNR19} and, additionally, require a careful selection of tuning parameters. In~\cite{ChaR18b}, an alternative method based on an additional stabilization term rather than an inner iteration is proposed. Although first-order convergence in time is observed in experiments, the theory presented in~\cite{ChaR18b,ChaR18} only guarantees suboptimal convergence of order $1/2$. Moreover, an extension to a higher-order method is by far not intuitive. Similarly, extensions of the aforementioned iterative schemes would require many additional inner iterations to guarantee the prescribed accuracy, counteracting the aim of an efficient numerical method. 

To combine the advantages of the monolithic and iterative coupling methods, a \emph{semi-explicit} time-stepping scheme was introduced in~\cite{AltMU21}, which decouples the equations and does not require an additional inner iteration or stabilization parameters. For a comparison of this method with the monolithic and different decoupling strategies, we refer to \cite{Muj22}. We emphasize that the semi-explicit scheme equals the implicit Euler discretization up to a term with a time shift in one of the equations. The perception of this scheme in terms of delay equations allows proving convergence of the method if a weak coupling condition is satisfied. This condition is independent of the step size and can be quantified explicitly. 

In this paper, we extend these ideas to construct and analyze a novel higher-order decoupling time integrator for coupled elliptic--parabolic problems, which include linear poroelasticity as a special case. To the best of our knowledge, this is the first time that a rigorous convergence analysis for a higher-order decoupling time discretization scheme is presented. For the construction of our scheme, we follow the general strategy developed in~\cite{AltMU21} and first construct a nearby delay system, which is then discretized in time. Recognizing that the first-order semi-explicit scheme analyzed in~\cite{AltMU21} can be understood as zeroth-order Taylor expansion, a straightforward approach would combine higher-order Taylor expansions (for the construction of the delay system) with higher-order time-integration schemes. The resulting delay equation, however, would be of advanced type, such that a sufficient regularity of the solution cannot be guaranteed as indicated in Section~\ref{sec:construction:Taylor}. Instead, we proceed with an expansion including multiple delays and use a backward differentiation formula~(BDF) for the time discretization of the resulting delay equation. Our main contributions are:
\begin{itemize}
	\item A BDF-type expansion to construct a delay equation that differs with a given order from the original elliptic--parabolic problem; cf.~Theorem~\ref{thm:multipleDelay}. This results in multiple delays in the first equation that enable the decoupling of the equations without the requirement for an inner iteration or additional tuning parameters.\\[-0.5em]
	\item A convergence proof for the second-order case in Theorems~\ref{thm:convergence} and~\ref{thm:BDF2}.
	For this, we solely work with the delayed parabolic equation that is obtained by resolving the elliptic equation and suitably adapt ideas from~\cite{AltMU21}. 	
	Moreover, we point out how this approach can be extended to higher orders. 
\end{itemize}
As in the first-order case, our method depends on a weak coupling condition, which we explicitly quantify via the theory of delay differential--algebraic equations in Section~\ref{sec:construction:stability}. We emphasize that the coupling strength of the two equations is also of relevance for the iterative decoupling methods mentioned earlier in the sense that they require more inner iterations if the coupling is stronger. Hence, they become inefficient for strongly coupled problems.  

Since we focus on time discretization, the whole convergence analysis is given on operator level, i.e., without a spatial discretization. Corresponding results for the fully discrete scheme can be obtained by the introduction of appropriate Ritz projections, cf.~\cite{AltCMPP20,AltMU21}. We conclude our presentation with three numerical examples in \Cref{sec:num}. 
\subsection*{Notation}  
We write~$a \lesssim b$ to indicate the existence of a generic constant~$C$, independent of spatial and temporal discretization parameters, such that~$a \leq C b$. 
%
%
\section{Poroelastic Equations}\label{sec:model}
In this section, we introduce the equations of linear poroelasticity and the corresponding abstract formulation as an elliptic--parabolic problem. 
We consider a bounded Lipschitz domain $\Omega \subseteq \R^d$, $d\in\{2,3\}$, in which we seek the displacement field~$u\colon[0,T] \times \Omega \rightarrow \R^d$ and the pore pressure~$p\colon[0,T] \times \Omega \rightarrow \R$. For a given time horizon $T>0$, the system equations read 
\begin{subequations}
\label{eq:poroStrong}
\begin{align}
	-\nabla \cdot \sigma(u) + \nabla (\alpha p) &\;=\; f\qquad \text{in } (0,T] \times \Omega, \\
	\partial_t\Big(\alpha \nabla \cdot u + \frac{1}{M} p \Big)- \nabla \cdot \Big( \frac{\kappa}{\nu}\, \nabla p \Big) &\;=\; g\qquad \text{in } (0,T] \times \Omega 
\end{align}
together with initial conditions  
\begin{align}
  u(0) = u^0, \qquad
  p(0) = p^0.
\end{align}
\end{subequations}
Therein, $\sigma$ denotes the stress tensor 
\[
  \sigma(u) 
  = \mu \, \big(\nabla u + (\nabla u)^T \big) + \lambda\, (\nabla \cdot u) \,\id
\]
with Lam\'e coefficients $\lambda$ and $\mu$, the permeability $\kappa$, the Biot-Willis fluid-solid coupling coefficient $\alpha$, the Biot modulus $M$, and the fluid viscosity~$\nu$; see~\cite{Biot41,Sho00}. Since some of these coefficients play a central role for the analysis of our scheme, we report the coefficients for a selection of different materials in \Cref{tab:examplesWeakCoupling}. The right-hand sides $f$ and $g$ are the volumetric load and the fluid source, respectively, modeling an injection or production process. 
Throughout this paper, we assume homogeneous Dirichlet boundary conditions, i.e., we set~$u = 0$ and~$p = 0$ on $(0,T] \times \partial \Omega$. 
%
%
\subsection{Abstract formulation}\label{sec:model:abstract}
For an abstract formulation of~\eqref{eq:poroStrong}, we introduce the Hilbert spaces 
\begin{equation*}
  \V \coloneqq [H^1_0(\Omega)]^d, \qquad
  \cHV \coloneqq [L^2(\Omega)]^d, \qquad 
  \Q \coloneqq H^1_0(\Omega), \qquad 
  \cHQ \coloneqq L^2(\Omega)
\end{equation*}
which include the assumed Dirichlet boundary conditions. With the respective dual spaces of~$\V$ and $\Q$ denoted by $\V^*$ and $\Q^*$, $(\V,\cHV,\V^*)$ as well as~$(\Q,\cHQ,\Q^*)$ form Gelfand triples with dense embeddings; see~\cite[Ch.~23.4]{Zei90a} for more details. Moreover, we define the bilinear forms 
\begin{align*}
  a(u,v) 
  \coloneqq \int_\Omega \sigma(u) : \varepsilon(v) \dx&, \qquad 
  b(p,q) 
  \coloneqq \int_\Omega \frac{\kappa}{\nu}\ \nabla p \cdot \nabla q \dx,\\
  c(p,q) 
  \coloneqq \int_\Omega \frac{1}{M}\ p\, q \dx&, \qquad 
  d(u,q) 
  \coloneqq \int_\Omega \alpha\, (\nabla \cdot u)\, q \dx
\end{align*}
with the classical double dot notation, i.e., for matrices $A,B\in\R^{n\times m}$ we have $A:B = \trace(A^T B)$, and the symmetric gradient~$\varepsilon(u) \vcentcolon= \tfrac{1}{2}(\nabla u + (\nabla u)^T)$ used in continuum mechanics. 
With this, the weak formulation of~\eqref{eq:poroStrong} can be written as follows: 
seek $u\colon[0,T] \to \V$ and~$p\colon[0,T] \to \Q$ such that 
\begin{subequations}
\label{eq:poroAbstract}
\begin{align}
    a(u,v) - d(v, p) 
    &= \langle f, v \rangle, \label{eq:poroAbstract:a} \\
    d(\dot u, q) + c(\dot p,q) + b(p,q) 
    &= \langle g, q\rangle \label{eq:poroAbstract:b} 
\end{align}
\end{subequations}
for all test functions~$v\in \V$, $q \in \Q$. Correspondingly, we assume that the right-hand sides satisfy~$f\colon[0,T] \to \V^*$ and $g\colon[0,T] \to \Q^*$ and denote with $\langle\cdot,\cdot\rangle$ the respective duality pairings. 
We would like to emphasize that it is sufficient to prescribe initial data for~$p$, since equation~\eqref{eq:poroAbstract:a} defines a consistency condition for $p^0$ and~$u^0$ (which is uniquely solvable for $u^0$; see the forthcoming discussion on the properties of the bilinear forms). 

System~\eqref{eq:poroAbstract} may also be written in operator form in the dual spaces of $\V$ and~$\Q$. For this, let $\calA$, $\calB$, $\calC$, and $\calD$ denote the operators corresponding to the bilinear forms $a$, $b$, $c$, and $d$, respectively. Then, \eqref{eq:poroAbstract} is equivalent to 
\begin{align*}
  \calA u - \calD^* p &= f\qquad\, \text{in }\V^*, \\
  \calD \dot u + \calC \dot p + \calB p &= g\qquad\; \text{in }\Q^*.
\end{align*}

It remains to discuss the properties of the bilinear forms. The bilinear form $a\colon \V\times\V\to \R$ is symmetric, elliptic, and bounded, i.e., there exist positive constants $c_a, C_a$ such that 
\begin{equation*}
  a(u,u) 
  \geq c_a\, \Vert u \Vert_\V^2, \qquad
  a(u,v) 
  \leq C_a\, \Vert u \Vert_\V \Vert v\Vert_\V
\end{equation*} 
for all~$u, v\in \V$. We would like to emphasize that $a$ is well known from the theory of linear elasticity and that the ellipticity follows from Korn's inequality~\cite[Th.~6.3.4]{Cia88}. 
%
Similarly, $b\colon \Q\times\Q\to \R$ is symmetric, elliptic, and bounded in~$\Q$, i.e., there exist positive constants $c_b, C_b$ such that  
\begin{equation*}
  b(p,p) 
  \geq c_b\, \Vert p \Vert_\Q^2, \qquad
  b(p,q)  
  \leq C_b\, \Vert p \Vert_\Q \Vert q\Vert_\Q
\end{equation*}
for all~$p, q\in \Q$. 
%
The bilinear form $c \colon \cHQ\times \cHQ\to\R$ simply involves the multiplication by a (positive) constant and, hence, defines an inner product in the pivot space~$\cHQ$. In more detail, there exist positive constants $c_c, C_c$ such that 
\begin{equation*}
  c(p,p) 
  \geq c_c\, \Vert p \Vert_{\cHQ}^2, \qquad
  c(p,q) 
  \leq C_c\, \Vert p \Vert_\cHQ \Vert q\Vert_\cHQ
\end{equation*}
for all~$p, q\in \cHQ$. 
%
The remaining bilinear form~$d\colon \V\times\cHQ\to \R$ models the coupling and is continuous, i.e., there exists a positive constant $C_d$ such that 
\begin{equation*}
  d(u,p) 
  \leq C_d\, \Vert u \Vert_\V \Vert p\Vert_\cHQ
\end{equation*}
for all~$u \in \V$ and $p\in \cHQ$. 
\begin{remark}
System~\eqref{eq:poroAbstract} can also be used to model linear thermoelasticity, which considers the displacement of a material due to temperature changes~\cite{Bio56}. 	
More generally, system~\eqref{eq:poroAbstract} is an elliptic--parabolic system, where the elliptic part (modeled by $a$) and the parabolic part (modeled by $b$ and $c$) are coupled through the bilinear form~$d$. We emphasize that the forthcoming analysis does not depend on the specific application, but only on the properties of the bilinear forms introduced above. 
\end{remark}
%
%
\subsection{Spatial discretization}\label{sec:model:space}
Although this paper is mainly concerned with the temporal discretization, we shortly comment on the finite element discretization of~\eqref{eq:poroAbstract}. For more details, we refer to~\cite{ErnM09}. 
In order to transfer the convergence results of this paper to the fully discrete system, one may consider spatial projection operators corresponding to the elliptic bilinear forms~$a$ and~$b$; see~\cite{AltMU21}. 

Considering finite-dimensional subspaces $V_h\subseteq \V$ and $Q_h\subseteq \Q$, one seeks approximations~$u_h \approx u$ and~$p_h \approx p$. Here, the parameter~$h$ represents the mesh size of the triangulation used in the construction of $V_h$ and $Q_h$. A direct spatial discretization of~\eqref{eq:poroAbstract} then leads to the differential--algebraic equation  
\[
\begin{bmatrix} 0 & 0 \\ D & M_c \end{bmatrix}
\begin{bmatrix} \dot u_h \\ \dot p_h \end{bmatrix}
=  
\begin{bmatrix} -K_a & D^T \\ 0 & -K_b \end{bmatrix}
\begin{bmatrix} u_h \\ p_h \end{bmatrix}
+ \begin{bmatrix} f_h \\ g_h \end{bmatrix}.
\]
Therein, $K_a$ and~$K_b$ denote the stiffness matrices corresponding to the bilinear forms $a$ and~$b$, respectively. Due to the assumptions discussed above, $K_a$ and~$K_b$ can be assumed to be symmetric and positive definite. Moreover, $M_c$ equals the mass matrix corresponding to~$c$, which is thus also symmetric and positive definite, and $D$ is a rectangular matrix corresponding to~$d$. 

Using standard $P_1$ finite elements to define $V_h$ and $Q_h$, one obtains the expected convergence rates of order one in the energy norms and order two in the $L^2$-norms. For more precise results also on higher-order approximations, we again refer to~\cite{ErnM09}. 
%
%
\subsection{Temporal discretization of first order}\label{sec:model:time}
The standard way to discretize system~\eqref{eq:poroAbstract} in time is the application of the implicit Euler scheme. This results in a time-stepping scheme of order one as shown in~\cite{ErnM09}. 

As already mentioned in the introduction, the differential--algebraic structure rules out the possibility of a fully explicit discretization in time. In~\cite{AltMU21}, however, a semi-explicit scheme was introduced. Considering an equidistant decomposition of $[0,T]$ with step size~$\tau$, this scheme reads  
\begin{subequations}
\label{eqn:semiExpl:Ord1}
\begin{align}
  a(u^{n+1},v) - d(v, p^{n}) 
  &= \langle f^{n+1}, v \rangle, \\
  \tfrac 1\tau\, d(u^{n+1}-u^n, q) + \tfrac 1\tau\, c(p^{n+1}-p^n,q) + b(p^{n+1},q) 
  &= \langle g^{n+1}, q\rangle 
\end{align}
\end{subequations}
for all~$v\in \V$, $q \in \Q$. Here, $u^n$ and $p^n$ denote the approximations of $u(t^n)$ and $p(t^n)$, $t^n=n\tau$, respectively. Note that, in contrast to the implicit Euler discretization, the first equation contains $p^n$ rather than $p^{n+1}$. Hence, the two equations decouple and can be solved sequentially. It is shown in~\cite{AltMU21} that this maintains the first-order convergence as long as the \emph{weak coupling condition}
\begin{equation}\label{eq:coupling1}
	\alpha^2 M 
	\le \mu+\lambda 
\end{equation}
is satisfied. Note that this condition is specific to the equations of poroelasticity. For general elliptic--parabolic problems defined via the bilinear forms~$a$, $b$, $c$, and~$d$, the weak coupling conditions reads~$C_d^2 \le c_a c_c$. 

For the convergence analysis, the connection of system~\eqref{eq:poroAbstract} to a related delay system is used. This idea is also applied in the following sections to construct a semi-explicit scheme of order two, leading to a slightly more restrictive weak coupling condition. 
%
%
\section{Semi-explicit Integration Scheme of Second Order}\label{sec:construction}
This section is devoted to the extension of the semi-explicit scheme~\eqref{eqn:semiExpl:Ord1} to second order. Following the idea in \cite{AltMU21}, we first construct a related delay equation and then discretize the delay equation with an implicit scheme of second order. For this, we need to replace the pressure in the first poroelastic equation by a time-delayed term which is second-order accurate. We first consider a Taylor expansion before we introduce discrete derivatives, leading to a system with multiple delays. 

In the following, we consider a uniform partition of the time interval~$[0,T]$ with step size~$\tau>0$ such that~$N \coloneqq T/\tau\in\N$. Hence, we consider time points~$t^n=n\tau$ for~$n=0,\dots,N$. The approximation of a function $y$ at time $t^n$ is then denoted by~$y^n$. 
%
%
\subsection{Related delay systems by Taylor expansion}\label{sec:construction:Taylor}
In a first step, we aim to decouple the elliptic and parabolic equation in~\eqref{eq:poroAbstract} by replacing~$p$ in the first equation by a Taylor expansion. For a first-order approximation, one can simply replace $p(t)$ by the delay term~$\shift{\tau}{p(t)} \coloneqq p(t-\tau)$, cf.~\Cref{sec:model:time}. In the general case, we replace $p(t)$ by the Taylor expansion of order $k$ at time $t-\tau$. This then leads to the delay system 
\begin{subequations}
\label{eqn:delay:Taylor}
\begin{align}
  a(\bar{u},v) - d\big(v, {\textstyle\sum}_{j=0}^{k-1} \tfrac{\tau^{j}}{j!}\, \shift{\tau}{\bar{p}^{(j)}} \big) 
  &= \langle f, v \rangle, \label{eqn:delay:Taylor:a} \\
  d(\dot {\bar{u}}, q) + c(\dot {\bar{p}},q) + b(\bar{p},q) 
  &= \langle g, q\rangle \label{eqn:delay:Taylor:b}
\end{align}
\end{subequations}
for test functions~$v\in \V$ and~$q \in \Q$. As initial condition, we set~$\bar{p}(0)=p(0)=p^0$. In contrast to the original system, however, we need additional information on~$\bar{p}$ (and its derivatives) on the interval~$[-\tau, 0]$.  
For this, we introduce a so-called {\em history function} $\bar{\Phi}(t) = \bar{p}\big|_{[-\tau, 0]}(t)$, for which we assume~$\bar{\Phi} \in C^\infty([-\tau, 0], \Q)$ with 
\begin{equation}
\label{eqn:historyTaylor}
	\bar{\Phi}(-\tau) = \bar{\Phi}(0) = p^0, \qquad
	\bar{\Phi}^{(j)}(-\tau) = 0\quad \text{for } j=1, \dots, k-1.
\end{equation}
The conditions on the derivatives of~$\bar{\Phi}$ ensure the consistency of the initial data, i.e., $\bar{u}(0)=u(0)=u^0$. 

Since system~\eqref{eqn:delay:Taylor} is constructed by the help of a Taylor expansion, it is no surprise that the solutions~$(u,p)$ and~$(\bar{u},\bar{p})$ only differ by a term of order~$\tau^k$ as long as the solution of the delay systems stays stable. We refer to \Cref{appendix} for further details.
Nevertheless, system~\eqref{eqn:delay:Taylor} is not well-suited for the construction of a numerical scheme. This is due to the appearance of temporal derivatives, which turn out to be critical as we illustrate in the sequel. 
%
Already in the case of interest, namely $k=2$, the resulting delay system~\eqref{eqn:delay:Taylor} is of \emph{advanced} type \cite{BelC63} and, hence, only well-posed in a distributional setting \cite{TreU19}. This can be seen as follows. In operator form, the delay equation reads
\begin{equation*}
 \calA \bar{u} - \calD^*(\shift{\tau}{\bar{p}} + \tau\shift{\tau}{\dot{\bar{p}}})
 = f, \qquad 
 \calD \dot{\bar{u}} + \calC \dot{\bar{p}} + \calB \bar{p} 
  = g.
\end{equation*}
The first equation yields $\bar u = \calA^{-1} \calD^* (\shift{\tau}{\bar{p}} + \tau\, \shift{\tau}{\dot{\bar{p}}} + f)$. Inserting this in the second equation, we obtain 
\begin{equation*}
  \calC \dot{\bar{p}} + \calB \bar{p} 
  = g - \calD \dot{\bar{u}}
  = g - \calD \calA^{-1} \calD^* (\shift{\tau}{\dot{\bar{p}}} + \tau\, \shift{\tau}{\ddot{\bar{p}}} + \dot{f}). 
\end{equation*}
For a given history function, this is a parabolic problem that can be solved on the interval $(0,\tau)$. We thus can proceed iteratively to construct a solution. Nevertheless, due to the term $\shift{\tau}{\ddot{\bar{p}}}$, this solution loses regularity over time, and is hence not suitable for numerical methods. We refer to \cite{BelZ03,AltZ18,Ung18} for further details. 

To avoid such advanced delay systems, we consider an alternative approach and replace the derivatives by discrete derivatives. This then yields a system with multiple delays.  
%
%
\subsection{Discrete derivatives}\label{sec:construction:discreteDerivatives}
Based on the discrete difference operator $D y^{n+1} \vcentcolon= y^{n+1} - y^{n}$, we can write discrete derivatives of any order in a short way. 
As an example, the difference quotients of order one and two read
\[
\frac1\tau D y^{n+1} 
= \frac{y^{n+1} - y^{n}}{\tau},\qquad 
\frac1\tau \big(D y^{n+2} + \tfrac12 D^2 y^{n+2}\big)
= \frac{3y^{n+2}-4y^{n+1}+y^n}{2\tau},
\]
leading, e.g., to the well-known BDF schemes. For the upcoming analysis, it is convenient to extend the definition of the difference operator also to continuous functions. More precisely, we define~$D y \vcentcolon= y - \shift{\tau}{y}$ with the time shift~$\shift{\tau}{}$ introduced above. The resulting order-$k$ approximations of the derivative of a function are summarized in the following lemma. 
\begin{lemma}[discrete derivative]
\label{lem:BDF}
Let $y\in C^{k}([0,T])$ and $t\in [k\tau,T]$. Then, it holds that 
\begin{equation*}
	\dot y(t)
	= \frac1\tau\, \sum_{j=1}^{k} \tfrac1j\, D^j\, y(t) + \calO(\tau^k).
\end{equation*}
Moreover, if we have $y\in C^{k+1}([0,T])$, then there exist constants $c_1,\ldots,c_k$ such that the error term can be written as $\sum_{j=1}^k c_j \int_{t-j\tau}^t (t-\xi)^j y^{(j+1)}(\xi)\,\mathrm{d}\xi$. 
\end{lemma}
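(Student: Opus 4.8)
The plan is to establish the identity
\[
  \dot y(t) = \frac1\tau \sum_{j=1}^{k} \frac1j\, D^j y(t) + \calO(\tau^k)
\]
by recognizing the sum as a truncated expansion of the operator relation between the backward difference $D = I - \shift{\tau}{}$ and the derivative $\ddt$. Formally, the shift operator is $\shift{\tau}{} = e^{-\tau \ddt}$ (by Taylor's theorem applied to $y(t-\tau)$), so $D = I - e^{-\tau \ddt}$ and hence $\tau \ddt = -\log(I - D)$. Expanding the logarithm gives the series $\tau\,\dot y = \sum_{j=1}^{\infty} \frac1j D^j y$, and the claimed formula is exactly this series truncated after $j=k$. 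The proof is therefore a matter of making this formal identity rigorous with an explicit remainder, which I would do by induction on $k$.

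First I would set up the base and inductive mechanism by relating the error at order $k$ to the error at order $k-1$. Define the remainder $R_k(t) \vcentcolon= \dot y(t) - \frac1\tau \sum_{j=1}^{k} \frac1j D^j y(t)$. The key algebraic observation is that applying $D$ lowers the leading order: since $D y(t) = y(t) - y(t-\tau) = \tau\,\dot y(t) + \calO(\tau^2)$, each factor of $D$ contributes one power of $\tau$ while differentiating $y$ once. More precisely, I would show by Taylor expansion with integral (or Lagrange) remainder that for $y \in C^{k+1}$,
\[
  D^j y(t) = \tau^j y^{(j)}(t) + \calO(\tau^{j+1}),
\]
which requires $t \ge j\tau$ so that all evaluation points $t, t-\tau, \dots, t-j\tau$ lie in $[0,T]$; the hypothesis $t \ge k\tau$ guarantees this for all $j \le k$. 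Matching the coefficients $\frac1j$ against the Taylor coefficients of $\dot y$ then shows the sum reproduces $\dot y(t)$ through order $\tau^{k-1}$ in the bracket (equivalently through the $\calO(\tau^k)$ error after the $\frac1\tau$ prefactor).

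For the sharp remainder claim in the second assertion, I would argue that under $y \in C^{k+1}$ the leading uncancelled term is governed by $y^{(k+1)}$. The cleanest route is to compute the error directly via the integral form of Taylor's theorem: each $D^j y(t)$ can be written exactly as $\tau^j y^{(j)}(t)$ plus a weighted integral of $y^{(k+1)}$ over a subinterval of $[t-k\tau, t]$, and collecting these contributions yields a single integral against $y^{(k+1)}$. Since the resulting kernel has one sign (as is standard for such difference-operator remainders), the mean value theorem for integrals produces the stated form $c_k\,\tau^k y^{(k+1)}(\xi)$ with $\xi \in [t-k\tau, t]$ and an explicit constant $c_k$ that can in principle be read off from the coefficient of the logarithm series.

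The main obstacle I anticipate is the bookkeeping that establishes the \emph{exact} remainder with a fixed point $\xi$ and a clean constant $c_k$, as opposed to the cruder $\calO(\tau^k)$ bound which follows almost immediately from iterated Taylor expansion. Verifying that the combined kernel does not change sign, so that the integral mean value theorem applies and yields a single evaluation point $\xi$, is the delicate step; one must track the signed weights coming from the telescoping structure of $D^j$ and confirm they assemble into a definite-sign kernel. For the specific case $k=2$ needed in the paper this can be checked by a short direct computation, which I would present to pin down $c_2$ explicitly, leaving the general $k$ to the same pattern.
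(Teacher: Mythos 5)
You should know at the outset that the paper offers no proof of this lemma at all: it is quoted as the classical consistency statement for BDF-$k$ differentiation (the coefficients $\tfrac1j$ being, as you correctly observe, the truncation of $-\log(I-D)$), so your proposal must stand entirely on its own. As written, it has a genuine gap in the first assertion. The only quantitative tool you actually set up is $D^j y(t) = \tau^j y^{(j)}(t) + \calO(\tau^{j+1})$, with the errors for different $j$ treated as unrelated. Substituting this into the sum gives
\begin{equation*}
  \frac1\tau\sum_{j=1}^{k}\frac1j\,D^j y(t)
  = \dot y(t) + \frac{\tau}{2}\,\ddot y(t) + \dots + \frac{\tau^{k-1}}{k}\,y^{(k)}(t) + \calO(\tau),
\end{equation*}
and already the term $\tfrac\tau2\ddot y(t)$ is $\calO(\tau)$, not $\calO(\tau^k)$. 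The lemma is true only because these terms cancel against the \emph{subleading} Taylor terms of the lower-order differences: for $k=2$, the contribution $-\tfrac{\tau^2}{2}\ddot y$ hidden inside $Dy$ cancels the leading term $\tfrac{\tau^2}{2}\ddot y$ of $\tfrac12 D^2y$. To exhibit this cancellation you must expand \emph{every} $D^j y$ through order $\tau^k$, i.e.\ $D^j y(t) = \sum_{m=j}^{k} a_{j,m}\,\tau^m y^{(m)}(t) + (\text{remainder})$, and then verify the coefficient identities $\sum_{j=1}^{m}\tfrac1j\,a_{j,m} = \delta_{m,1}$ for $1\le m\le k$ --- which is precisely the truncated-logarithm identity you invoke only formally. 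In other words, the entire analytic content of the lemma sits inside the step you describe as ``matching the coefficients'', and the machinery you display cannot perform it. The proposed induction fares no better: from $R_k = R_{k-1} - \tfrac1{k\tau}D^k y$ one can conclude $R_k = \calO(\tau^k)$ only if one knows the \emph{exact leading coefficient} of $R_{k-1}$, namely $R_{k-1} = \tfrac{\tau^{k-1}}{k}\,y^{(k)}(t) + \calO(\tau^k)$; so the induction must carry the second assertion, with explicit constants, alongside the first.

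The second assertion also has a hole. Your fallback of a ``short direct computation'' for $k=2$ does not deliver the claim if carried out with Lagrange remainders: one obtains $\dot y(t) - \tfrac1{2\tau}\bigl(3y(t)-4y(t-\tau)+y(t-2\tau)\bigr) = \tfrac{2\tau^2}{3}y'''(\eta_2) - \tfrac{\tau^2}{3}y'''(\eta_1)$, a combination with \emph{opposite-sign} weights at two different points, and no intermediate-value argument collapses such an expression to $c_2\tau^2 y'''(\xi)$, since $2y'''(\eta_2)-y'''(\eta_1)$ need not lie in the range of $y'''$. So the sign-definiteness of the Peano kernel, which you defer as ``standard'', is not a formality; it is the whole difficulty, and it is verified for no $k$. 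A classical route that repairs both gaps at once: the sum $\frac1\tau\sum_{j=1}^k\frac1j D^j y(t)$ is exactly $P'(t)$, where $P$ is the polynomial interpolating $y$ at the nodes $t, t-\tau,\dots,t-k\tau$ (Newton backward-difference form; the coefficients arise from $\tfrac{d}{ds}\binom{s+j-1}{j}\big|_{s=0}=\tfrac1j$). Defining $\lambda$ by $\dot y(t)-P'(t) = \lambda\,\omega'(t)$ with $\omega(x)=\prod_{j=0}^k\bigl(x-(t-j\tau)\bigr)$, the function $F = y - P - \lambda\omega$ vanishes at the $k+1$ nodes and $F'$ vanishes additionally at $t$; repeated application of Rolle's theorem yields $\xi\in[t-k\tau,t]$ with $F^{(k+1)}(\xi)=0$, hence $\lambda = \frac{y^{(k+1)}(\xi)}{(k+1)!}$ and the error equals $\frac{y^{(k+1)}(\xi)}{(k+1)!}\prod_{j=1}^k j\tau = \frac{\tau^k}{k+1}\,y^{(k+1)}(\xi)$. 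This proves both assertions simultaneously, with the explicit constant $c_k = \frac1{k+1}$ and no kernel-sign analysis.
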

%
%
\subsection{Related delay system with multiple delays}\label{sec:construction:multDelay}
As already mentioned, we want to replace the derivatives in~\eqref{eqn:delay:Taylor} by discrete derivatives. Focusing on the case $k=2$, which will lead to a scheme of second order, we replace $\tau \shift{\tau}{\dot{\bar{p}}}$ by $D  \shift{\tau}{\bar{p}} = \shift{\tau}{\bar{p}} - \shift{2\tau}{\bar{p}}$. This leads to the system 
\begin{subequations}
\label{eqn:delay:multipleDelay}
\begin{align}
  a(\tilde{u},v) - d\big(v, 2 \shift{\tau}{\tilde{p}} - \shift{2\tau}{\tilde{p}} \big) 
  &= \langle f, v \rangle, \label{eqn:delay:multipleDelay:a} \\
  d(\dot {\tilde{u}}, q) + c(\dot {\tilde{p}},q) + b(\tilde{p},q) 
  &= \langle g, q\rangle \label{eqn:delay:multipleDelay:b}
\end{align}
\end{subequations}
for all test functions~$v\in \V$ and~$q \in \Q$. Note that this is a system with two delays, namely $\tau$ and $2\tau$. 
%
Again, we need to discuss the initial data, which includes $\tilde{p}(0)=p(0)=p^0$ and an appropriate history function~$\tilde{\Phi}$ defined on $[-2\tau,0]$. To obtain consistency in $u$, namely~$\tilde{u}(0)=u(0)=u^0$, we assume that~$\tilde{\Phi} \in C^\infty([-2\tau, 0], \Q)$ satisfies  
\begin{align}
\label{eqn:historyMultipleDelays}
p^0
= 2 \shift{\tau}{\tilde{p}}(0) - \shift{2\tau}{\tilde{p}}(0)
= 2\, \tilde{\Phi}(-\tau) - \tilde{\Phi}(-2\tau).
\end{align}
%
%
\begin{remark}[approximations of higher order]
\label{rem:higherOrderBDF}
For general $k\ge1$, one possibility is to replace the derivatives $\shift{\tau}{\bar{p}^{(j)}}$ in~\eqref{eqn:delay:Taylor} by approximations of order $\tau^{k-j}$. 
This then guarantees that the resulting expression is an approximation of the Taylor expansion ${\textstyle\sum}_{j=0}^{k-1} \tfrac{\tau^{j}}{j!}\, \shift{\tau}{\bar{p}^{(j)}}$ of order $k$. 
Note, however, that this leads to a growing number of delays. For $k=3$ this yields three delays, whereas $k=4$ already needs five delays. The resulting scheme for $k=3$ is presented in \Cref{sec:num:order3}.
\end{remark} 
\begin{remark}[parabolic equation with multiple delays]
Considering the operator formulation of~\eqref{eqn:delay:multipleDelay} and eliminating the variable~$\tilde{u}$ by the first equation, we get 
\begin{align}
\label{eq:parabolicDelaySect3}
  \calC \dot{\tilde{p}} + \calB \tilde{p} + \calD\calA^{-1}\calD^* (2\shift{\tau}{\dot{\tilde{p}}} - \shift{2\tau}{\dot{\tilde{p}}})
  = g - \calD\calA^{-1} \dot{f}.
\end{align}
Note that this is a parabolic equation (of neutral type) with two delays. Hence, we consider here multiple delays rather than higher derivatives. 
\end{remark}
Motivated by the approximation properties of the Taylor expansion approach, the following theorem shows that the solutions to~\eqref{eq:poroAbstract} and~\eqref{eqn:delay:multipleDelay} only differ by a term of order two. 
\begin{theorem}
\label{thm:multipleDelay}
Assume sufficiently smooth right-hand sides~$f$ and~$g$ and a history function~$\tilde{\Phi}$ satisfying~\eqref{eqn:historyMultipleDelays}. 
Then, the solutions to~\eqref{eq:poroAbstract} and~\eqref{eqn:delay:multipleDelay} are equal up to a term of order~$\tau^2$, i.e., for almost all~$t\in[0,T]$ we have 
\begin{equation*}
	\Vert \tilde{p}(t) - p(t) \Vert^2_\Q + \Vert \tilde{u}(t) - u(t)\Vert^2_\V 
	\ \lesssim\ t\, \tau^{4}.
\end{equation*}
Here, the hidden constant depends on higher derivatives of the history function~$\tilde{\Phi}$ as well as of $\tilde{p}$. 
\end{theorem}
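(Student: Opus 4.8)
The plan is to prove this by an energy estimate on the error system, comparing the solution $(u,p)$ of the original poroelastic system~\eqref{eq:poroAbstract} with the solution $(\tilde u, \tilde p)$ of the multiple-delay system~\eqref{eqn:delay:multipleDelay}. Write $e_u \coloneqq \tilde u - u$ and $e_p \coloneqq \tilde p - p$. Subtracting the two systems, the error satisfies
\begin{subequations}
\label{eqn:errorSys}
\begin{align}
  a(e_u,v) - d\big(v, 2\shift{\tau}{\tilde p} - \shift{2\tau}{\tilde p} - p \big) &= 0, \label{eqn:errorSys:a}\\
  d(\dot e_u, q) + c(\dot e_p, q) + b(e_p, q) &= 0. \label{eqn:errorSys:b}
\end{align}
\end{subequations}
The key observation is that the coupling term in~\eqref{eqn:errorSys:a} is not $d(v,e_p)$ but rather $d(v, e_p + \rho)$, where the \emph{consistency defect}
\[
  \rho(t) \coloneqq 2\shift{\tau}{p(t)} - \shift{2\tau}{p(t)} - p(t) = 2p(t-\tau) - p(t-2\tau) - p(t)
\]
is exactly the BDF-type extrapolation error. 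By \Cref{lem:BDF} (equivalently, a direct Taylor expansion of the linear extrapolant), $\rho$ is of order $\tau^2$; more precisely $\rho(t) = -\tau^2 \ddot p(\xi)$ for some $\xi$ near $t$, so $\|\rho(t)\|_\Q \lesssim \tau^2$ uniformly, provided $p\in C^2([0,T],\Q)$. This is where the smoothness of $f$, $g$ and the history function enters.

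The core estimate proceeds as follows. First I would test~\eqref{eqn:errorSys:a} with $v = \dot e_u$ and~\eqref{eqn:errorSys:b} with $q = e_p$, then add. The mixed terms $d(\dot e_u, e_p)$ cancel up to the defect: from~\eqref{eqn:errorSys:a} one reads off $a(e_u,\dot e_u) = d(\dot e_u, e_p + \rho)$, so the $d(\dot e_u, e_p)$ appearing from~\eqref{eqn:errorSys:b} can be replaced by $a(e_u,\dot e_u) - d(\dot e_u, \rho)$. Using symmetry of $a$ and $c$, this generates the exact time derivatives $\tfrac12\ddt a(e_u,e_u)$ and $\tfrac12\ddt c(e_p,e_p)$, together with the dissipative term $b(e_p,e_p)\ge c_b\|e_p\|_\Q^2$ and the single remaining coupling defect $-d(\dot e_u, \rho)$. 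Integrating in time over $[0,t]$ and invoking ellipticity of $a$ and $c$ and consistency of the initial data ($e_u(0)=0$, $e_p(0)=0$ from~\eqref{eqn:historyMultipleDelays}) yields
\[
  \tfrac12 c_a \|e_u(t)\|_\V^2 + \tfrac12 c_c \|e_p(t)\|_{\cHQ}^2 + c_b \int_0^t \|e_p\|_\Q^2 \ds
  \ \le\ \Big| \int_0^t d(\dot e_u, \rho)\ds \Big|.
\]

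The main obstacle is the defect term $\int_0^t d(\dot e_u,\rho)\,\ds$, since it involves $\dot e_u$, which is not directly controlled by the left-hand energy. The standard remedy is integration by parts in time,
\[
  \int_0^t d(\dot e_u,\rho)\ds = d(e_u,\rho)\big|_0^t - \int_0^t d(e_u,\dot\rho)\ds,
\]
which trades $\dot e_u$ for the boundary term $d(e_u(t),\rho(t))$ and the lower-order term $d(e_u,\dot\rho)$, at the cost of requiring $p\in C^3$ so that $\dot\rho = \calO(\tau^2)$ as well. The boundary term is bounded by $C_d\|e_u(t)\|_\V\|\rho(t)\|_{\cHQ}$, which after a Young inequality with weight tuned against $\tfrac12 c_a$ is absorbed into the $\|e_u(t)\|_\V^2$ on the left, leaving a remainder $\lesssim \|\rho(t)\|_{\cHQ}^2 \lesssim \tau^4$; here the weak coupling condition is what guarantees the absorption leaves a positive coefficient. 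The integral term $\int_0^t d(e_u,\dot\rho)\ds$ is handled by Young and Gronwall, producing the factor $t$ and a bound $\lesssim t\,\tau^4$. Collecting all contributions and dropping the nonnegative dissipation integral gives $\|e_p(t)\|_{\cHQ}^2 + \|e_u(t)\|_\V^2 \lesssim t\,\tau^4$; the $\|e_p\|_\Q$ bound in the stated inequality then follows by feeding the $\cHQ$- and $\V$-control back into~\eqref{eqn:errorSys:a}–\eqref{eqn:errorSys:b} (or directly from the dissipation integral together with an inverse-type argument), which upgrades the pressure norm from $\cHQ$ to $\Q$ at the same order.
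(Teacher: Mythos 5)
There are two genuine gaps here. First, your consistency defect is the wrong object. Subtracting \eqref{eq:poroAbstract:a} from \eqref{eqn:delay:multipleDelay:a} gives $a(e_u,v) - d\big(v,\, 2\shift{\tau}{\tilde p} - \shift{2\tau}{\tilde p} - p\big) = 0$, and writing the coupling entry as $e_p + \rho$ forces $\rho = -\big(\tilde p - 2\shift{\tau}{\tilde p} + \shift{2\tau}{\tilde p}\big)$, i.e., the second difference of the \emph{delay} solution $\tilde p$. With your choice $\rho = 2\shift{\tau}{p} - \shift{2\tau}{p} - p$, the identity $a(e_u,v) = d(v, e_p + \rho)$ is false: it is off by $d\big(v,\, e_p - 2\shift{\tau}{e_p} + \shift{2\tau}{e_p}\big)$, a term involving the unknown error at three time levels which cannot be treated as a defect; moreover $p(t-2\tau)$ is not even defined for $t < 2\tau$ (only $\tilde p$ is extended to $[-2\tau,0]$ by the history function $\tilde\Phi$). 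This is exactly why the paper sets up the error equation as $a(e_u,v) - d(v,e_p) = -d\big(v,\, \tilde p - 2\shift{\tau}{\tilde p} + \shift{2\tau}{\tilde p}\big)$ and why the hidden constant depends on derivatives of $\tilde p$ and $\tilde\Phi$, not of $p$. Relatedly, your appeal to the weak coupling condition is spurious: \Cref{thm:multipleDelay} does not assume it, and the Young absorption of the boundary term works for arbitrary constants $C_d, c_a$, merely worsening the hidden constant (cf.\ the remark following the theorem).

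Second, and more seriously, your argument never produces the pointwise-in-time bound $\Vert e_p(t)\Vert_\Q^2 \lesssim t\,\tau^4$ asserted in the statement. Testing the parabolic error equation with $q = e_p$ yields only $\tddt \Vert e_p\Vert_c^2$ (pointwise $\cHQ$-control) plus the dissipation integral $\int_0^t \Vert e_p\Vert_\Q^2\,\text{d}s$; the closing remark about "feeding back into the equations" or an "inverse-type argument" does not work, since extracting $\Vert e_p(t)\Vert_\Q$ from \eqref{eqn:delay:multipleDelay:b} and \eqref{eq:poroAbstract:b} would require pointwise control of $\dot e_u$ and $\dot e_p$, which you have not established, and there is no discrete structure to invert. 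The paper supplies precisely this missing piece (see the proof of \Cref{prop:delayTaylor}, to which the theorem's proof reduces): differentiate the elliptic error equation in time, test it with $v = \dot e_u$, and test the parabolic error equation with $q = \dot e_p$. This puts $\Vert \dot e_u\Vert_a^2 + \Vert \dot e_p\Vert_c^2 + \tfrac12 \tddt \Vert e_p\Vert_b^2$ on the left; the defect term $d(\dot e_u,\cdot)$ is absorbed into $\tfrac12\Vert\dot e_u\Vert_a^2$ by Young, yielding both the pointwise $\Q$-bound on $e_p$ and a bound on $\int_0^t\Vert\dot e_u\Vert_\V^2\,\text{d}s$. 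With the latter in hand, the estimate you wrote (test $v = \dot e_u$, $q = e_p$) closes directly, with no integration by parts in time and no Gronwall. If you insist on your integration-by-parts route for the $\cHQ$-part, note finally that the boundary term $d(e_u(t),\rho(t))$ only gives $\tau^4$, not $t\,\tau^4$; to recover the factor $t$ you must use that $\rho(0)=0$ by the consistency condition \eqref{eqn:historyMultipleDelays}, whence $\Vert\rho(t)\Vert_{\cHQ} \lesssim \min(1,t)\,\tau^2$.
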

\begin{proof}
We define~$e_p \coloneqq \tilde{p} - p$ and~$e_u \coloneqq \tilde{u} - u$. Due to the assumptions on the history function, we conclude that~$e_p(0)=0$ and~$e_u(0)=0$. Considering the difference of~\eqref{eqn:delay:multipleDelay:a} and~\eqref{eq:poroAbstract:a}, we obtain 
\begin{align*}
  a(e_u,v) - d(v, e_p) 
  = - d(v, \tilde{p} - 2\shift{\tau}{\tilde{p}} + \shift{2\tau}{\tilde{p}} )
  \le \tau^2\, C_d\, \|v\|_\V \|\ddot{\tilde{p}}\|_{L^\infty(-2\tau,T;\cHQ)},
\end{align*}
where $L^\infty(-2\tau,T;\cHQ)$ denotes the Bochner space on the time interval~$(-2\tau,T)$ with values in $\cHQ$. In the same manner, we obtain by the derivatives of~\eqref{eqn:delay:multipleDelay:a} and~\eqref{eq:poroAbstract:a} that  
\begin{align*}
  a(\dot e_u,v) - d(v, \dot e_p) 
  \le \tau^2\, C_d\, \|v\|_\V \|\tilde{p}^{(3)}\|_{L^\infty(-2\tau,T;\cHQ)}. 
\end{align*}
Now we can proceed as in the proof of \Cref{prop:delayTaylor}, i.e., we consider the test function $v=\dot{e}_u$ in combination with the difference of~\eqref{eqn:delay:multipleDelay:b} and~\eqref{eq:poroAbstract:b}.	
\end{proof}
\begin{remark}
	The hidden constant in \Cref{thm:multipleDelay} may become arbitrarily large depending on the ellipticity and continuity constants. This is discussed in more detail in \Cref{sec:construction:stability}.
\end{remark}
System~\eqref{eqn:delay:multipleDelay} yields a good starting point for the construction of higher-order discretization schemes. This is subject of the following subsection. 
%
\subsection{Semi-explicit integration scheme}\label{sec:construction:scheme}
In order to obtain a semi-explicit time-stepping scheme, we now apply the BDF-$2$ scheme to~\eqref{eqn:delay:multipleDelay}. To shorten notation, we introduce 
\[
  \bdf u^{n+2}
  \coloneqq 3 u^{n+2} - 4 u^{n+1} + u^{n}
  = 2D u^{n+2} + D^2 u^{n+2}.
\]
By \Cref{lem:BDF}, we know that $\frac{1}{2\tau}\bdf u(t) = \dot u(t) + \calO(\tau^2)$. Since the first equation does not contain any derivatives, the temporal discretization is simply given by a function evaluation at time $t^{n+2}$ (as for the implicit Euler scheme). This discretization yields the semi-explicit scheme 
\begin{subequations}
\label{eqn:semiExplicit:order2}
\begin{align}
  a(u^{n+2}, v) - d\big(v, 2 p^{n+1} - p^{n} \big) 
  &= \langle f^{n+2}, v\rangle, \label{eqn:semiExplicit:order2:a}\\
  \tfrac1{2\tau} d\big( \bdf u^{n+2}, q\big) + \tfrac1{2\tau} c\big( \bdf\, p^{n+2}, q\big) + b(p^{n+2}, q) 
  &= \langle g^{n+2}, q\rangle \label{eqn:semiExplicit:order2:b}
\end{align}
\end{subequations}
%
%
for test functions~$v\in \V$ and~$q \in \Q$. Note that this is a~$2$-step scheme, calling for initial data~$p^0=p(0)$ and $p^1$. In place of the history function, we set~$p^{-2}, p^{-1} \in \Q$ such that 
\begin{equation}
\label{eq:pInitial}
  p^0 = 2\, p^{-1} - p^{-2}, \qquad
  p^1 = 2\, p^0 - p^{-1}. 
\end{equation}
The first condition corresponds to~\eqref{eqn:historyMultipleDelays} and gives the consistency condition for~$u^0$. The second equation ensures that $p^1$ and $u^1$ are consistent. To be precise, this means that the resulting values~$u^0, u^1\in\V$ satisfy 
\begin{equation}\label{eq:uInitial}
  a(u^0, v) - d\big(v, p^0 \big) = \langle f^0, v\rangle, \qquad
  a(u^1, v) - d\big(v, p^1 \big) = \langle f^1, v\rangle  
\end{equation}
for all $v\in \V$. 

The proposed scheme~\eqref{eqn:semiExplicit:order2} is indeed semi-explicit, since the first equation defines~$u^{n+2}$ purely by already computed values, i.e., without the knowledge of~$p^{n+2}$. Inserting this value in the second equation, we then obtain the approximation~$p^{n+2}$. 
In operator form, this scheme reads 
\begin{align*}
  \calA u^{n+2} - \calD^* \big(2p^{n+1}-p^n\big) 
  &= f^{n+2}, \\
  \calD \big(3u^{n+2} - 4u^{n+1} + u^n\big) + \calC \big(3p^{n+2} - 4p^{n+1} + p^n \big) + 2\tau \calB p^{n+2} 
  &= 2\tau g^{n+2}.
\end{align*}
Using once more the invertibility of the operator $\calA$,  
we can eliminate the $u$-variables in the second equation, leading to 
\begin{equation}\label{eq:discform}
\begin{aligned}
  \calC \big(3p^{n+2} &- 4p^{n+1} + p^n \big) 
  + 2\tau \calB p^{n+2}\\&
  + \calD\calA^{-1}\calD^* \big( 6p^{n+1} - 11p^n + 6p^{n-1} - p^{n-2} \big) \\
  &\hspace{3.5cm}= 2\tau g^{n+2}
  - \calD\calA^{-1} \big(3f^{n+2} - 4f^{n+1} + f^n \big).
\end{aligned}
\end{equation}
We would like to emphasize that this equals the BDF-$2$ discretization of the delay equation~\eqref{eq:parabolicDelaySect3}. 
This fact will be used in the following convergence result. 
\begin{theorem}[Second-order convergence of the semi-explicit scheme]
\label{thm:convergence}
Assume sufficiently smooth right-hand sides~$f$ and~$g$. Moreover, let the operators satisfy the {\em weak coupling condition} 
\begin{equation}
\label{eqn:weakCouplingCondition}
  \omega
  \coloneqq 
  \frac{\alpha^2 M}{\mu+\lambda}
  \le \frac 15.  
\end{equation}
Then the semi-explicit scheme~\eqref{eqn:semiExplicit:order2} converges with order two. More precisely, given $p^0=p(0)$ and~$p^1$ as a second-order approximation of~$p(\tau)$, 
we can define consistent $u^0$ and~$u^1$ in the sense of~\eqref{eq:uInitial} such that 
\[
  \|u(t^{n})-u^{n}\|^2_{\V}
  + \|p(t^n)-p^n\|^2_{\cHQ} + \tau \sum_{j=1}^n \|p(t^j)-p^j\|^2_\Q
  \lesssim t^n \tau^4 
\]
for all $n\ge 0$.  
\end{theorem}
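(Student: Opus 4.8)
The plan is to split the error through the solution $(\tilde u, \tilde p)$ of the multiple-delay system~\eqref{eqn:delay:multipleDelay} and to treat the two resulting contributions separately. Writing $p(t^n) - p^n = \big(p(t^n) - \tilde p(t^n)\big) + \big(\tilde p(t^n) - p^n\big)$ and analogously for~$u$, the first summand is the \emph{model error}, already controlled by \Cref{thm:multipleDelay}, which contributes a term of the asserted order $t^n\tau^4$ in the squared norms. It therefore remains to bound the \emph{discretization error} $\rho^n \coloneqq \tilde p(t^n) - p^n$, and here I would exploit the observation made just before the theorem: the reduced scheme is exactly the BDF-$2$ discretization of the parabolic delay equation~\eqref{eq:parabolicDelaySect3}. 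Hence the problem reduces to a convergence statement for BDF-$2$ applied to an operator equation with the two delays $\tau$ and $2\tau$, which is precisely the abstract result to be established in \Cref{sec:errorAnalysis}; the proof then consists in verifying that the poroelastic operators meet its hypotheses, with the quantity~$\omega$ from~\eqref{eqn:weakCouplingCondition} playing the role of the relevant smallness parameter.

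For the discretization error I would work with the reduced, pressure-only form of the scheme. Eliminating~$u$ through the elliptic equation introduces the symmetric positive semidefinite operator $\calD\calA^{-1}\calD^*$, whose norm is bounded by $C_d^2/c_a$, and the error $\rho^n$ then satisfies
\begin{equation*}
  \calC\,\bdf \rho^{n+2} + 2\tau\,\calB \rho^{n+2} + \calD\calA^{-1}\calD^*\big(6\rho^{n+1} - 11\rho^n + 6\rho^{n-1} - \rho^{n-2}\big) = 2\tau\, d^{\,n+2},
\end{equation*}
where $d^{\,n+2}$ collects the BDF-$2$ consistency defect, which is of order $\tau^2$ by \Cref{lem:BDF}. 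The natural energy argument tests this identity with $\rho^{n+2}$ in~$\cHQ$: the $\calC$-term is handled by the $G$-stability identity of BDF-$2$, producing a telescoping quadratic form that is coercive thanks to the constant~$c_c$, while the $\calB$-term yields the dissipation $2\tau\, c_b\,\|\rho^{n+2}\|_\Q^2$ that feeds the last term on the left-hand side of the claimed estimate.

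The heart of the matter, and the step I expect to be the main obstacle, is the treatment of the neutral delay term. Using the identity $6\rho^{n+1} - 11\rho^n + 6\rho^{n-1} - \rho^{n-2} = \bdf\big(2\rho^{n+1} - \rho^n\big)$, this is a genuinely multi-level expression and cannot be absorbed pointwise in~$n$. Instead I would construct an augmented discrete Lyapunov functional that adds to the BDF-$2$ $G$-functional suitable $\calD\calA^{-1}\calD^*$-weighted contributions of the recent history $\rho^{n+1}, \rho^n, \rho^{n-1}$, and then perform summation by parts so that the delay term is rewritten as an increment of this functional plus controllable remainders. Requiring the resulting total quadratic form to remain nonnegative is exactly where the coupling enters: the worst-case ratio of the indefinite history contributions (governed by $\|\calD\calA^{-1}\calD^*\| \le C_d^2/c_a$) to the coercive $\calC$-part (governed by~$c_c$) must not exceed a threshold, and a careful bookkeeping of the coefficients $6,-11,6,-1$ yields precisely $\omega = C_d^2/(c_ac_c) \le \tfrac15$, the weak coupling condition~\eqref{eqn:weakCouplingCondition}. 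Once stability is secured in this form, a discrete Gronwall argument applied to the $\tau^2$ consistency error gives $\|\rho^n\|_{\cHQ}^2 + \tau\sum_{j}\|\rho^j\|_\Q^2 \lesssim t^n\tau^4$.

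Finally, the displacement error is recovered from the elliptic equation rather than estimated independently: subtracting~\eqref{eqn:semiExplicit:order2:a} from~\eqref{eq:poroAbstract:a} gives $a\big(u(t^{n}) - u^{n}, v\big) = d\big(v, p(t^{n}) - (2p^{n-1}-p^{n-2})\big)$, so by ellipticity $\|u(t^n)-u^n\|_\V \le (C_d/c_a)\,\|p(t^n) - (2p^{n-1}-p^{n-2})\|_{\cHQ}$. Splitting $p(t^n) - (2p^{n-1}-p^{n-2})$ into the pressure errors at the two previous levels plus the second-order extrapolation remainder $2p(t^{n-1}) - p(t^{n-2}) - p(t^n) = \calO(\tau^2)$ bounds $\|u(t^n)-u^n\|_\V$ by the already-controlled pressure error together with a $\tau^2$-remainder, hence $\|u(t^n)-u^n\|_\V \lesssim \sqrt{t^n}\,\tau^2$, which is of the order required by the left-hand side. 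Combining this with the model-error bound from \Cref{thm:multipleDelay} through the triangle inequality completes the proof; the consistency of the starting values $u^0, u^1$ in the sense of~\eqref{eq:uInitial} guarantees that the recursion is initialized with errors of the right order.
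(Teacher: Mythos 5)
Your proposal is correct and follows essentially the same route as the paper: you split the error through the solution of the multiple-delay system and control the model error by \Cref{thm:multipleDelay}, identify the reduced scheme as the BDF-$2$ discretization of the delay equation~\eqref{eq:parabolicDelaySect3} and invoke the abstract convergence result of \Cref{sec:errorAnalysis} (i.e.~\Cref{thm:BDF2}) with $\omega$ as the continuity constant of the rescaled coupling operator $\calC^{-1}\calD\calA^{-1}\calD^*$, and finally recover the displacement error from the elliptic equation via the ellipticity of~$a$. The only cosmetic deviation is that you bound the $u$-error directly against the exact solution using the second-order extrapolation remainder $2p(t^{n-1})-p(t^{n-2})-p(t^n)=\calO(\tau^2)$, whereas the paper compares against the delay solution $\tilde u$; both yield the same estimate.
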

\begin{proof}
Given $p^0$ and~$p^1$, we define~$p^{-2}$ and~$p^{-1}$ satisfying~\eqref{eq:pInitial} such that $u^0,u^1$ are consistent. Moreover, let~$\tilde{\Phi}$ be a history function with~$\tilde{\Phi}(-2\tau) = p^{-2}$ and $\tilde{\Phi}(-\tau) = p^{-1}$ such that~\eqref{eqn:historyMultipleDelays} is satisfied. 
We can now apply \Cref{thm:multipleDelay} and conclude that the exact solution and the solution of the delay system~\eqref{eqn:delay:multipleDelay} only differ by a term of order two. Hence, it is sufficient to compare the discrete solution given by~\eqref{eqn:semiExplicit:order2} with $(\tilde{p}, \tilde{u})$. 

We have seen that the presented semi-explicit scheme corresponds to the BDF-$2$ method applied to the delay equation~\eqref{eq:parabolicDelaySect3}. Since the operator $\calC$ only contains a multiplicative factor, we may consider a simple rescaling leading to the question of the convergence of the BDF-$2$ scheme applied to the delay system 
\begin{equation}\label{eq:delaypdep}
  \dot{\tilde{p}} + \calBB \tilde{p} + \calCC\, \big( 2\shift{\tau}{\dot{\tilde{p}}} - \shift{2\tau}{\dot{\tilde{p}}} \big)
  = \h 
  \coloneqq \calC^{-1} g - \calC^{-1}\calD\calA^{-1} \dot{f} 
\end{equation}	
with $\calBB\coloneqq \calC^{-1} \calB \colon \V \to \V^*$ and $\calCC\coloneqq\calC^{-1}\calD\calA^{-1}\calD^*\colon \cHQ \to \cHQdual$. Note that these two operators are symmetric, elliptic, and continuous in the respective spaces and that the continuity constant of $\calCC$ equals~$\omega$. 
In \Cref{thm:BDF2} of the following section, we show that this implies an estimate of the form 
%
\[
  \|\tilde{p}(t^n)-p^n\|^2_{\cHQ} + \tau \sum_{j=1}^n \|\tilde{p}(t^j)-p^j\|^2_\Q
  \lesssim t^n\, \tau^4 + t^n\, E_\mathrm{rhs} + E_\mathrm{init}
\]
for $n \geq 2$. 
The right-hand side error $E_\mathrm{rhs}$ appears because the approximation of the right-hand side in~\eqref{eq:delaypdep} involves a BDF-2 approximation of $f$ rather than the nodal evaluation; see~\eqref{eq:discform}. However, due to Lemma~\ref{lem:BDF}, $E_\mathrm{rhs}$ is of order $\calO(\tau^4)$. 
Due to the assumption on~$p^1$, we have $E_\mathrm{init} = \| \tilde p(\tau)-p^{1}\|^2 + \tau\, \| \tilde p(\tau)-p^{1}\|_{\tilde b}^2 \lesssim \tau^4$ and leads to an overall error of order two. Hence, the above estimate holds for all~$n \geq 0$. 
Moreover, considering the difference of equations~\eqref{eqn:delay:multipleDelay:a} and~\eqref{eqn:semiExplicit:order2:a}, we get by the ellipticity of the bilinear form $a$ that
\[
  \|\tilde{u}(t^{n+2})-u^{n+2}\|_{\V}
  \lesssim 2\, \|\tilde{p}(t^{n+1})-p^{n+1}\|_{\cHQ} + \|\tilde{p}(t^{n})-p^{n}\|_{\cHQ} 
\]
for $n \geq 0$. Finally, due to the consistency conditions for $u^0$ and $u^1$, we further get~$u(0)=\tilde u(0)=u^0$ and  
\begin{equation*}
  \|{u}(\tau)-u^{1}\|_{\V} 
  \lesssim \|{p}(\tau)-p^{1}\|_{\cHQ}.
\end{equation*}
The combination of the previous estimates completes the proof. 
\end{proof}
\begin{remark}[Initial data]
\label{rem:initDataImplEuler}	
In practice, appropriate initial conditions can be realized as follows: given $p^0$, one first computes~$u^0$ consistent to~\eqref{eq:poroAbstract:a}. Then, $p^1$ and $u^1$ can be obtained by a single step of the implicit Euler discretization applied to~\eqref{eq:poroAbstract}. This then guarantees consistency as well as the needed accuracy for $p^1$. 
\end{remark}
Before we discuss the convergence of the semi-explicit scheme, we focus on the weak coupling condition~\eqref{eqn:weakCouplingCondition} and its meaning in terms of delay equations. 
%
%
\subsection{Weak coupling condition and asymptotic stability of the delay system}
\label{sec:construction:stability}
First, let us emphasize that there are several poroelasticity problems reported in the literature that satisfy the weak coupling condition~\eqref{eqn:weakCouplingCondition}, or almost satisfy the weak coupling condition; see \Cref{tab:examplesWeakCoupling}. The latter will be relevant as well as the following discussion demonstrates. 

\begin{table}
	\centering
	\caption{Poroelasticity problems reported in the literature and their relation to the weak coupling condition~\eqref{eqn:weakCouplingCondition}. 
	All examples consider porous media in combination with water~\cite[Tab.~4]{DetC93}. }
	\label{tab:examplesWeakCoupling}
	\begin{tabular}{l||cc|cc|c|c}
		porous media & $\lambda$ & $\mu$ & $\alpha$ & $M$ & $\kappa / \nu$ & $\omega$\\\midrule\midrule
		%
		%
		Tennessee marble & $2.40\cdot10^{10}$ & $2.4\cdot10^{10}$ & 0.19 & $1.16\cdot10^{11}$ & $1.0\cdot10^{-19}$ & 0.09\\
		Charcoal granite & $2.23\cdot10^{10}$ & $1.9\cdot10^{10}$ & 0.27 & $8.50\cdot10^{10}$ & $1.0\cdot10^{-19}$ & 0.15 \\\midrule
		Weber sandstone & $5.14\cdot10^{9}$ & $1.2\cdot10^{10}$ & 0.64 & $2.79\cdot10^{10}$ & $1.0\cdot10^{-15}$ & 0.45\\ 
		Westerly granite & $1.5\cdot10^{10}$ & $1.5\cdot10^{10}$ & 0.47 & $7.64\cdot10^{10}$ & $4.0\cdot10^{-19}$ & 0.56\\\midrule		
		Berea sandstone & $4.00\cdot10^{9}$ & $6.0\cdot10^{9}$ & 0.79 & $1.23\cdot10^{10}$ & $1.9\cdot10^{-13}$ & 0.76\\
		Ruhr sandstone & $4.11\cdot10^{9}$ & $1.3\cdot10^{10}$ & 0.65 & $4.05\cdot10^{10}$ & $2.0\cdot10^{-16}$ & 1.00\\

	\end{tabular}
\end{table}

To see that the weak coupling condition is not a mere technical assumption, we analyze the asymptotic stability of the related delay system constructed in \Cref{sec:construction:multDelay} with multiple delays. To simplify the presentation, we consider here the finite-dimensional case after a semi-discretization in space (cf.~\Cref{sec:model:space}), and study the neutral delay differential equation corresponding to~\eqref{eq:parabolicDelaySect3}, i.e., we study the neutral delay equation
\begin{equation}
	\label{eqn:neutralDDE}
	\dot{\tilde{p}}_h + M_c^{-1}DK_a^{-1}D^T \left(2\shift{\tau}{\dot{\tilde{p}}_h} - \shift{2\tau}{\dot{\tilde{p}}_h}\right) + M_c^{-1}K_b\tilde{p}_h = \tilde{g}_h.
\end{equation}
A necessary condition (cf.~\cite[Thm.~3.20]{GuKC03}) for the delay-independent asymptotic stability of the unforced (i.e.,~$\tilde{g}_h=0$) delay equation~\eqref{eqn:neutralDDE} is that the spectral radius of the matrix
\begin{displaymath}
	N_2 \vcentcolon= \begin{bmatrix}
		-2M_c^{-1}DK_a^{-1}D^T & M_c^{-1}DK_a^{-1}D^T\\
		I & 0
	\end{bmatrix}
\end{displaymath}
is strictly less than one, i.e., $\rho(N_2) < 1$. Hereby, $I$ denotes the identity matrix of suitable dimension. We thus have to compute the eigenvalues of $N_2$.
Since $M_c$ is symmetric and positive definite, the (principle) square root $M_c^{1/2}$ exists and is symmetric and positive definite. Thus, the matrix $M_c^{-1/2}DK_a^{-1}D^TM_c^{-1/2}$ is symmetric and hence diagonalizable, i.e., there exists a diagonal matrix $\Lambda$ and an orthogonal matrix~$U$ such that 
\begin{displaymath}
	UM_c^{-1/2}DK_a^{-1}D^TM_c^{-1/2}U^{-1} = \Lambda.
\end{displaymath}
Define $\Xi \vcentcolon= \mathrm{diag}(UM_c^{1/2},UM_c^{1/2})$. Then, 
\begin{displaymath}
	\Xi N_2\Xi^{-1} = \begin{bmatrix}
		-2\Lambda & \Lambda\\
		I & 0
	\end{bmatrix}.
\end{displaymath}
For any eigenvalue $\lambda_\Lambda$ of $\Lambda$, it thus suffices to compute the spectral radius of the matrix
\begin{displaymath}
	\begin{bmatrix}
		-2\lambda_\Lambda & \lambda_\Lambda\\
		1 & 0
	\end{bmatrix},
\end{displaymath}
which is given by $\lambda_\Lambda + \sqrt{\lambda_\Lambda^2 + \lambda_\Lambda}$. Since this is a monotone expression, we conclude
\begin{displaymath}
	\rho(N_2) = \rho(M_c^{-1}DK_a^{-1}D^T) + \sqrt{\rho(M_c^{-1}DK_a^{-1}D^T)^2 + \rho(M_c^{-1}DK_a^{-1}D^T )},
\end{displaymath}
and thus $\rho(N_2) < 1$ if and only if $\rho(M_c^{-1}DK_a^{-1}D^T) < \tfrac{1}{3}$. Consequently, we cannot expect the delay equation to be a reasonable approximation of the non-delay equation if $\rho(M_c^{-1}DK_a^{-1}D^T) > \tfrac{1}{3}$. In fact, in the scalar case, it is easy to see that $\rho(M_c^{-1}DK_a^{-1}D^T) < \tfrac{1}{3}$ is also a sufficient condition for delay-independent asymptotic stability.
Using
\begin{displaymath}
	\frac{\alpha^2 M}{\mu+\lambda} 
	\leq \rho(M_c^{-1}DK_a^{-1}D^T) 
	< \tfrac{1}{3},
\end{displaymath}
we observe that a weak coupling condition as in~\eqref{eqn:weakCouplingCondition} is not only a technical requirement, but indeed necessary for convergence. We discuss the details in the error analysis in the next section.

\section{Convergence Analysis}\label{sec:errorAnalysis}
In this section, we prove the convergence of the BDF-$2$ method applied to the delay operator equation 
\begin{align}
\label{eq:parabolicDelay}
  \dot{z} + \calBB z + \calCC\, \big( 2\shift{\tau}{\dot{z}} - \shift{2\tau}{\dot{z}} \big)
  = \h.
\end{align}
Here, $\calBB\colon \Q\to\Q^*$ is an operator with the same properties as $\calB$ in the previous section and $\calCC\colon \cHQ\to\cHQdual$ is an operator with the same properties as $\calC$ with continuity constant~$\omega$. 
Similar as in \Cref{sec:construction:multDelay}, we assume, besides the initial condition $z(0)=z^0$, a given history function~$\Phi \in C^\infty([-2\tau, 0], \Q)$ with $\Phi(0) = z^0$. Moreover, the right-hand side~$\h\colon [0,T] \to \Q^*$ is sufficiently smooth. 

For the error analysis, we first require the following lemma. 
\begin{lemma}
\label{lem:symBilinear:k2}
For a symmetric bilinear form $\mathfrak{a}$ it holds that
\begin{align*}
  2\, \mathfrak{a}(z^{n+2}, \bdf z^{n+2})
  = \bdf\, \Vert z^{n+2} \Vert^2_\mathfrak{a}
  + 2\, \Vert D z^{n+2} \Vert^2_\mathfrak{a} 
  - 2\, \Vert D z^{n+1} \Vert^2_\mathfrak{a}
  + \Vert D^2 z^{n+2} \Vert^2_\mathfrak{a} 
\end{align*}
with $\Vert \cdot \Vert_\mathfrak{a}^2 \coloneqq \mathfrak{a}(\,\cdot\,,\cdot\,)$.
\end{lemma}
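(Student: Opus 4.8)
The plan is to reduce the identity to an elementary algebraic verification, since both sides are quadratic expressions in the three vectors $z^{n}$, $z^{n+1}$, $z^{n+2}$. First I would record the defining relations $\bdf z^{n+2} = 2D z^{n+2} + D^2 z^{n+2} = 3z^{n+2} - 4z^{n+1} + z^{n}$ together with $D z^{n+2} = z^{n+2} - z^{n+1}$, $D z^{n+1} = z^{n+1} - z^{n}$, and $D^2 z^{n+2} = z^{n+2} - 2z^{n+1} + z^{n}$, so that every object appearing in the claim (including $\bdf\,\Vert z^{n+2}\Vert_\mathfrak{a}^2 = 3\Vert z^{n+2}\Vert_\mathfrak{a}^2 - 4\Vert z^{n+1}\Vert_\mathfrak{a}^2 + \Vert z^{n}\Vert_\mathfrak{a}^2$) is written explicitly in terms of $z^{n}, z^{n+1}, z^{n+2}$.

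Using bilinearity and the symmetry $\mathfrak{a}(x,y) = \mathfrak{a}(y,x)$, I would then expand the left-hand side as $6\,\mathfrak{a}(z^{n+2},z^{n+2}) - 8\,\mathfrak{a}(z^{n+2},z^{n+1}) + 2\,\mathfrak{a}(z^{n+2},z^{n})$, and likewise expand each of the four terms on the right-hand side into the six basis pairings $\mathfrak{a}(z^{i},z^{j})$ with $i,j \in \{n,n+1,n+2\}$. Comparing coefficients reduces the claim to a handful of scalar identities. A slightly more structural alternative, which I would present if a cleaner argument is preferred, splits $\bdf z^{n+2}$ into its two difference pieces and applies the polarization identity $2\,\mathfrak{a}(x,x-y) = \Vert x\Vert_\mathfrak{a}^2 - \Vert y\Vert_\mathfrak{a}^2 + \Vert x-y\Vert_\mathfrak{a}^2$, valid for any symmetric $\mathfrak{a}$, once with $(x,y) = (z^{n+2},z^{n+1})$ for the contribution $2\,\mathfrak{a}(z^{n+2}, 2D z^{n+2})$ and once with $(x,y) = (z^{n+2},\, 2z^{n+1}-z^{n})$ for the contribution $2\,\mathfrak{a}(z^{n+2}, D^2 z^{n+2})$, followed by a short reconciliation of the remaining $z^{n+1},z^{n}$ terms.

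There is no genuine analytic obstacle here; the content of the statement is purely algebraic, and it is exactly the BDF-$2$ energy (G-stability) relation for a symmetric form. The only point requiring care is the bookkeeping: one must verify that after full expansion all pairings \emph{not} involving $z^{n+2}$ — that is, the net coefficients of $\mathfrak{a}(z^{n+1},z^{n+1})$, $\mathfrak{a}(z^{n},z^{n})$, and $\mathfrak{a}(z^{n+1},z^{n})$ — vanish, leaving precisely $6\,\mathfrak{a}(z^{n+2},z^{n+2})$, $-8\,\mathfrak{a}(z^{n+2},z^{n+1})$, and $2\,\mathfrak{a}(z^{n+2},z^{n})$ to match the left-hand side. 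This cancellation is the nontrivial core of the identity, and it is the step I would check most carefully.
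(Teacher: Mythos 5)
Your proposal is correct, but your primary route is genuinely different from the paper's. You expand both sides in the six pairings $\mathfrak{a}(z^i,z^j)$, $i,j\in\{n,n+1,n+2\}$, and compare coefficients; writing $a=z^{n+2}$, $b=z^{n+1}$, $c=z^{n}$, the four terms on the right-hand side sum to $6\,\mathfrak{a}(a,a)-8\,\mathfrak{a}(a,b)+2\,\mathfrak{a}(a,c)$ precisely because the net coefficients of $\mathfrak{a}(b,b)$ (namely $-4+2-2+4$), of $\mathfrak{a}(c,c)$ (namely $1-2+1$), and of $\mathfrak{a}(b,c)$ (namely $4-4$) all vanish --- exactly the cancellation you single out --- so the identity holds. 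The paper never descends to cross terms: it writes $\bdf z^{n+2}=3Dz^{n+2}-Dz^{n+1}$, splits the product into four expressions of the form $\mathfrak{a}(x,x-y)$, and applies the polarization formula \eqref{eq:symBilinearForm} to each, staying in squared-norm (``energy'') form throughout; this is the same formula your alternative route invokes, and it is reused directly in Step~1 of the proof of \Cref{thm:BDF2}, which is what the paper's presentation buys in terms of consistency, whereas your expansion is more elementary and arguably easier to verify line by line. Your sketched alternative also closes, with one detail worth recording: after polarizing $2\,\mathfrak{a}(z^{n+2},2Dz^{n+2})$ with $(x,y)=(z^{n+2},z^{n+1})$ and $2\,\mathfrak{a}(z^{n+2},D^2z^{n+2})$ with $(x,y)=(z^{n+2},2z^{n+1}-z^{n})$, the remaining reconciliation is the identity $\Vert 2b-c\Vert_\mathfrak{a}^2 = 2\Vert b\Vert_\mathfrak{a}^2-\Vert c\Vert_\mathfrak{a}^2+2\Vert b-c\Vert_\mathfrak{a}^2$, which follows from one further application of \eqref{eq:symBilinearForm} upon expanding $\Vert b+(b-c)\Vert_\mathfrak{a}^2$; so that variant is essentially the paper's proof with a different, arguably more natural, decomposition of $\bdf z^{n+2}$.
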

\begin{proof}
Using multiple applications of the formula 
\begin{align}
\label{eq:symBilinearForm}
  2\,\mathfrak{a}(x, x-y) 
  = \Vert x \Vert^2_\mathfrak{a} - \Vert y \Vert^2_\mathfrak{a}	+ \Vert x-y \Vert^2_\mathfrak{a},
\end{align}
we get  
\begin{align*}
	2\, &\mathfrak{a}(z^{n+2}, 3z^{n+2} - 4z^{n+1} + z^n)\\
	&= 2\, \mathfrak{a}(z^{n+2}, 3 Dz^{n+2} - Dz^{n+1})\\
	&= 4\, \mathfrak{a}(z^{n+2}, Dz^{n+2}) 
	+ 2\, \mathfrak{a}(z^{n+2}, Dz^{n+2} - Dz^{n+1})\\
	&= 4\, \mathfrak{a}(z^{n+2}, Dz^{n+2}) 
	+ 2\, \mathfrak{a}(Dz^{n+2}, Dz^{n+2} - Dz^{n+1}) \\
	&\qquad\qquad- 2\, \mathfrak{a}(z^{n+1}, Dz^{n+1})
	- 2\, \mathfrak{a}(z^{n+1}, z^{n+1}-z^{n+2})\\
	&= 3\,\Vert z^{n+2} \Vert^2_\mathfrak{a} - 4\,\Vert z^{n+1} \Vert^2_\mathfrak{a}
	+ \Vert z^{n} \Vert^2_\mathfrak{a}
	+ 2\,\Vert Dz^{n+2} \Vert^2_\mathfrak{a}
	- 2\,\Vert Dz^{n+1} \Vert^2_\mathfrak{a}
	+ \Vert Dz^{n+2}-Dz^{n+1} \Vert^2_\mathfrak{a},  
\end{align*}
which completes the proof. 
\end{proof}
After this preparation, we are now able to formulate the main convergence theorem.
\begin{theorem}[Convergence of BDF-$2$ for the delay equation~\eqref{eq:parabolicDelay}]
\label{thm:BDF2}
Let $\calBB\colon \Q\to\Q^*$ and $\calCC\colon \cHQ\to\cHQdual$ be symmetric, elliptic, and continuous in the respective spaces. Moreover, let $\omega$ denote the continuity constant of $\calCC$ satisfying $\omega\le 1/5$. 
Then, the BDF-$2$ scheme applied to~\eqref{eq:parabolicDelay}, i.e., the scheme
\[
  \bdf\, z^{n+2} 
  + 2\tau\, \calBB z^{n+2}
  + 2\,\calCC\, \bdf\, z^{n+1} - \calCC\, \bdf\, z^{n} 
  = 2\tau\, \tilde\h^{n+2}
\]
yields an approximation of second order, provided that $\tilde\h^{n+2}$ is a second-order approximation of $r(t^{n+2})$. To be precise, assuming a sufficiently smooth right-hand side~$\h$, a step size $\tau\le 1$, and initial data $z^0 = z(0),\,z^{-1} = \Phi(-\tau),\, z^{-2} = \Phi(-2\tau)$, we get 
\[
  \| z(t^n) - z^n \|^2_{\cHQ} + \tau \sum_{j=2}^n \| z(t^j) - z^j \|^2_\Q 
  \lesssim\  
  t^n \tau^4 + + t^nE_\mathrm{rhs} + E_\mathrm{init} 
\]
for $n \geq 2$, where $E_\mathrm{init} \coloneqq \|z(\tau)-z^{1}\|^2_{\cHQ} + \tau\, \| z(\tau)-z^{1}\|_{\Q}^2$ contains the initial error and $E_\mathrm{rhs} := \max_{j=2,\ldots,n} \|r(t^j) - \tilde\h^j\|^2_{\cHQ}$. 
\end{theorem}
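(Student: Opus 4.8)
The plan is a discrete energy estimate of $G$-stability type for \bdf: test the error recursion with the current iterate, use \Cref{lem:symBilinear:k2} to telescope the mass contribution, and treat the two delay terms---which, crucially, carry no factor~$\tau$---as the only genuinely delicate part, to be dominated by the telescoped energy under the coupling bound $\omega\le 1/5$. First I set up the consistency error: inserting the exact solution into the scheme and invoking \Cref{lem:BDF} (with $k=2$) for $\dot z(t^{n+2})$, $\dot z(t^{n+1})$ and $\dot z(t^{n})$ shows that $z(t^\bullet)$ solves the scheme up to a defect $2\tau\,\eta^{n+2}$ with $\|\eta^{n+2}\|\lesssim\tau^2$, the constant depending on $z^{(3)}$ and on the continuity constant $\omega$ of \calCC. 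Subtracting the scheme yields, for $e^n\coloneqq z(t^n)-z^n$,
\[
  \bdf\, e^{n+2} + 2\tau\,\calBB e^{n+2} + 2\,\calCC\,\bdf\, e^{n+1} - \calCC\,\bdf\, e^{n} = 2\tau\,\eta^{n+2}.
\]
By the choice of initial data $e^{0}=e^{-1}=e^{-2}=0$ (the history is reproduced exactly), so only $e^1$ enters the initial error and every boundary contribution below collapses into $E_\mathrm{init}$.

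Next I pair this identity with $e^{n+2}$ in the $\cHQ$-inner product. For the leading term, \Cref{lem:symBilinear:k2} applied with $\mathfrak a=(\cdot,\cdot)_{\cHQ}$ rewrites $(\bdf\, e^{n+2},e^{n+2})_{\cHQ}$ as a telescoping BDF-$2$ energy of the pair $(e^{n+2},e^{n+1})$, plus the telescoping term $2\|De^{n+2}\|_{\cHQ}^2-2\|De^{n+1}\|_{\cHQ}^2$ and the non-negative remainder $\|D^2 e^{n+2}\|_{\cHQ}^2$. The elliptic term is symmetric and coercive and contributes $2\tau\,\langle\calBB e^{n+2},e^{n+2}\rangle\gtrsim\tau\,\|e^{n+2}\|_\Q^2$, which sums to the dissipation $\tau\sum_j\|e^j\|_\Q^2$ of the claim; the right-hand side $2\tau(\eta^{n+2},e^{n+2})_{\cHQ}$ is controlled by Young's inequality by $\tau\|\eta^{n+2}\|^2+\tau\|e^{n+2}\|_{\cHQ}^2$, whose first part sums to $t^n\tau^4$ while the second is reserved for a final discrete Gronwall step (admissible since $\tau\le1$).

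The crux is the delay pair $2\,\langle\calCC\,\bdf\, e^{n+1},e^{n+2}\rangle-\langle\calCC\,\bdf\, e^{n},e^{n+2}\rangle$. These terms have no prefactor~$\tau$ and hence cannot be absorbed by Gronwall; they must be dominated by the telescoped $\cHQ$-energy itself. I would not estimate them term by term (expanding gives $\langle\calCC(6e^{n+1}-11e^{n}+6e^{n-1}-e^{n-2}),e^{n+2}\rangle$, whose naive bound is far too lossy), but instead sum over $n$ first and read the homogeneous part of mass-plus-delay as a single quadratic form in the iterates. Using only the continuity bound $\langle\calCC\,x,y\rangle\le\omega\,\|x\|_{\cHQ}\|y\|_{\cHQ}$, this aggregated form is non-negative up to boundary terms exactly when $\omega$ is small enough: the associated symbol factors through the telescoping mode and stays non-negative precisely for $\omega\le\tfrac13$, consistently with the neutral-stability bound $\rho<\tfrac13$ of \Cref{sec:construction:stability}, while the cruder balancing needed to retain a strictly positive remainder (and room for the reserved $\cHQ$-terms) yields the stated sufficient condition $\omega\le\tfrac15$. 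I expect this coercivity verification---tracking how each $\|e^m\|_{\cHQ}$ feeds the delay terms of three consecutive steps and checking that the surplus energy stays non-negative---to be the main obstacle.

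Finally, summing the tested identities from $n=0$ to $n-2$, the telescoping terms collapse to the final-time energy $\|e^n\|_{\cHQ}^2$ together with the initial data $E_\mathrm{init}$, the elliptic terms give $\tau\sum_j\|e^j\|_\Q^2$, the consistency terms give $t^n\tau^4$, and the residual $\tau$-weighted $\cHQ$-terms are removed by the discrete Gronwall inequality. This establishes the asserted bound for $n\ge2$.
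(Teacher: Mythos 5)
There is a genuine gap, and it sits exactly where you place it yourself: the coercivity of the ``aggregated quadratic form'' is asserted, not proven, and it is the entire proof. Your single-pass strategy tests the error recursion only with $e^{n+2}$ and then hopes that, after summation in $n$, the $\tau$-free delay contribution $2\langle \calCC\,\bdf e^{n+1},e^{n+2}\rangle-\langle\calCC\,\bdf e^{n},e^{n+2}\rangle$ is dominated by the telescoped mass energy. But when you expand these pairings (as the paper does in its Step~2, using \Cref{lem:symBilinear:k2} at shifted indices), the part that does not telescope consists of cross terms of the form $\langle \calCC\, De^{j},De^{j'}\rangle$ and $\langle\calCC\, D^2e^{j},De^{j'}\rangle$; after summing over $n$ these accumulate to a multiple of $\omega\sum_{j}\|De^{j}\|^2_{\cHQ}$ --- a sum of squared increments carrying no factor of $\tau$. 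The single-pass telescoped energy has nothing to absorb this: the increment terms produced by $\langle\bdf e^{n+2},e^{n+2}\rangle$ telescope to boundary values ($2\|De^{n}\|^2-2\|De^{1}\|^2$), not to a sum, and the $\tau$-weighted dissipation $\tau\sum_j\|e^j\|^2_{\Q}$ is of the wrong type. So either you prove a genuine G-stability/multiplier-type positivity result for this specific four-term neutral recursion on a finite horizon with nonzero $e^1$ (which you only name, and whose boundary-term bookkeeping is precisely the hard part), or you need an independent bound on $\sum_j\|De^j\|^2$. Your frequency-domain heuristic ($\omega\le 1/3$ at the mode $\theta=\pi$) is a reasonable plausibility check, but it uses that $\calCC$ commutes with the time shift and ignores the finite-horizon boundary terms, so it does not substitute for the estimate.

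The paper closes this gap with a two-step structure that your proposal is missing. \textbf{Step 1}: test the error equation with the \emph{increment} $De^{n+2}$ (not with $e^{n+2}$). After weighted Young inequalities the leading coefficients become $(4-\tau-\tfrac{29}{4}\omega)\,\|De^{n+2}\|^2$ and $(1-5\omega)\,\|D^2e^{n+2}\|^2$ --- this is where $\omega\le 1/5$ and $\tau\le 1$ enter --- and summation yields the auxiliary bound $\sum_{j=2}^n\|De^j\|^2\lesssim t^n\tau^4+E_\mathrm{init}$. \textbf{Step 2}: test with $e^{n+2}$ as you propose; all the $\tau$-free delay cross terms are now bounded by $38\omega\sum_j\|De^j\|^2\le 8\sum_j\|De^j\|^2$ and controlled by Step~1. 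Note also that the paper does not finish with a discrete Gronwall argument: the defect term is absorbed into the $\calBB$-dissipation, and the remaining recursion is closed by a contraction estimate with factor $5/6$ and the geometric series $\sum_j(5/6)^j=6$. Your overall frame (consistency defect of order $\tau^2$, energy telescoping via \Cref{lem:symBilinear:k2}, absorption of the defect, correct identification of the delay terms as the crux) is right, but without the increment estimate of Step~1 --- or a completed multiplier argument replacing it --- the proof does not go through.
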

\begin{proof}	
Inserting the exact solution of~\eqref{eq:parabolicDelay} within the numerical scheme, we obtain the defect equation
\begin{align*}
	\tfrac{1}{2\tau} \bdf z(t^{n+2}) &
	+ \calBB z(t^{n+2})
	+ \tfrac{1}{\tau} \calCC\, \big( \bdf z(t^{n+1}) \big)
	- \tfrac{1}{2\tau} \calCC\, \big( \bdf z(t^{n}) \big) \\
	&\quad = \tfrac{1}{2\tau} \bdf z(t^{n+2}) - \dot z(t^{n+2})
	+ \tfrac{1}{\tau} \calCC\, \big( \bdf z(t^{n+1}) \big)  \\
	&\qquad\quad- 2\, \calCC \dot z(t^{n+1}) - \tfrac{1}{2\tau} \calCC\, \big( \bdf z(t^{n}) \big) + \calCC \dot z(t^{n}) + r(t^{n+2})\\
	&\quad \eqqcolon d^{n+2} + r(t^{n+2})
\end{align*}	
with $d^{n+2} = \calO(\tau^2)$ by \Cref{lem:BDF}. 
With $e^n \coloneqq z(t^n)-z^n$, we get 
\begin{equation}\label{eq:BDFerrorRep1}
  %
  \bdf e^{n+2} 
  + 2\tau\, \calBB e^{n+2}
  + 2\, \calCC\, \bdf e^{n+1} 
  - \calCC\, \bdf e^{n} 
  = 2\tau \tilde d^{n+2}
\end{equation}
with $\tilde d^{n+2} = d^{n+2} + r(t^{n+2}) - \tilde\h^{n+2}$. 
Note that, due to the assumptions on the history function and the initial data, we have $e^{-2}=e^{-1}=e^0=0$. 
In the following, we write $\|\bullet \|_{\tilde b}$ for the norm induced by the operator~$\calBB$, which is equivalent to the $\Q$-norm, and $\|\bullet \|_{\tilde c}$ for the norm induced by $\calCC$. Note that the latter is equivalent to the $\cHQ$-norm with $\|\bullet\|^2_{\tilde c} \leq \omega\,\|\bullet\|^2$, where we use the short notation~$\|\bullet\| \coloneqq \|\bullet\|_\cHQ$. \medskip

{\bf Step 1:} 
In the first step, we derive an auxiliary estimate for differences of $e^n$. If we multiply~\eqref{eq:BDFerrorRep1} by 2 and apply $De^{n+2}$, we get 
\begin{equation}\label{eq:BDFerrorRep2}
	\begin{aligned}
	&\underbrace{2\,\langle \bdf e^{n+2}, D e^{n+2}\rangle}_{\eqqcolon\,T_1} 
	+ \underbrace{4\tau\,\langle \calBB e^{n+2}, D e^{n+2}\rangle}_{\eqqcolon\,T_2}
	\\&\qquad\qquad+ \underbrace{4\,\langle \calCC\, \bdf e^{n+1}, D e^{n+2}\rangle}_{\eqqcolon\,T_3} 
	- \underbrace{2\, \langle \calCC\, \bdf e^{n}, D e^{n+2}\rangle}_{\eqqcolon\,T_4}
	= 4\tau\, \langle \tilde d^{n+2}, D e^{n+2}\rangle.
	\end{aligned}
\end{equation}
Reformulating the terms $T_1$ and $T_2$ using~\eqref{eq:symBilinearForm} yields
\begin{equation*} 
  T_1
  = 4\, \|D e^{n+2}\|^2 + 2\, \langle D^2 e^{n+2}, D e^{n+2}\rangle
  = 5\, \|D e^{n+2}\|^2 - \|D e^{n+1}\|^2 + \|D^2 e^{n+2}\|^2 
\end{equation*}
and 
\begin{equation*} 
  T_2
  = 2\tau\, \big( \| e^{n+2}\|_{\tilde b}^2 - \| e^{n+1}\|_{\tilde b}^2 + \|D e^{n+2}\|_{\tilde b}^2 \big).
\end{equation*}
With $D e^{n+2} = D e^{n+1} + D^2 e^{n+2}$, we have
\begin{align*}
  T_3
  &= 8\, \langle \calCC\, D e^{n+1}, D e^{n+2}\rangle + 4\, \langle \calCC\, D^2 e^{n+1}, D e^{n+2}\rangle \\
  &= 8\, \langle \calCC\, D e^{n+1}, D e^{n+1} + D^2 e^{n+2}\rangle + 4\, \langle \calCC\, D^2 e^{n+1}, D e^{n+1} + D^2 e^{n+2}\rangle \\
  &= 8\, \| D e^{n+1}\|_{\tilde c}^2 + 2\, \|D e^{n+1}\|_{\tilde c}^2 - 2\, \|D e^{n}\|_{\tilde c}^2 + 2\, \|D^2 e^{n+1}\|_{\tilde c}^2 \\
  &\qquad + 8\, \langle \calCC\, D e^{n+1}, D^2 e^{n+2}\rangle + 4\, \langle \calCC\, D^2 e^{n+1}, D^2 e^{n+2}\rangle 
\end{align*}
and
\begin{align*}
  T_4
  \le 4\, \|D e^{n}\|_{\tilde c} \|D e^{n+2}\|_{\tilde c} + 2\, \|D^2 e^{n}\|_{\tilde c} \|D e^{n+2}\|_{\tilde c}. 
\end{align*}
The above computations inserted in~\eqref{eq:BDFerrorRep2} yield
\begin{align*}
  4\, \|D &e^{n+2}\|^2 + \|D^2 e^{n+2}\|^2 
  + 2\tau\, \|D e^{n+2}\|_{\tilde b}^2 
  + 8\, \| D e^{n+1}\|_{\tilde c}^2 + 2\, \|D^2 e^{n+1}\|_{\tilde c}^2 \\
  &\qquad+ \|D e^{n+2}\|^2 - \|D e^{n+1}\|^2
  + 2\tau\, \| e^{n+2}\|_{\tilde b}^2 - 2\tau\, \| e^{n+1}\|_{\tilde b}^2
  + 2\, \|D e^{n+1}\|_{\tilde c}^2 - 2\, \|D e^{n}\|_{\tilde c}^2 \\
  %
  &\le 4\tau\, \langle \tilde d^{n+2}, D e^{n+2} \rangle
  + 8\, \| D e^{n+1}\|_{\tilde c} \| D^2 e^{n+2}\|_{\tilde c} 
  + 4\, \| D^2 e^{n+1}\|_{\tilde c} \| D^2 e^{n+2}\|_{\tilde c} \\
  &\qquad+ 4\, \|D e^{n}\|_{\tilde c} \|D e^{n+2}\|_{\tilde c} + 2\, \|D^2 e^{n}\|_{\tilde c} \|D e^{n+2}\|_{\tilde c} \\
  &\le 4\tau\, \| \tilde d^{n+2}\|^2 + \tau\, \| D e^{n+2}\|^2 
  + 4\delta\, \| D e^{n+1}\|_{\tilde c}^2 + \tfrac4\delta\omega\, \| D^2 e^{n+2}\|^2
  + 2\gamma\, \| D^2 e^{n+1}\|^2_{\tilde c}  \\
  &\qquad+ \tfrac2\gamma\omega\, \| D^2 e^{n+2}\|^2+ \tfrac2\alpha\, \|D e^{n}\|_{\tilde c}^2 + 2\alpha\omega\, \|D e^{n+2}\|^2 + \tfrac1\beta\|D^2 e^{n}\|^2_{\tilde c} + \beta\omega\,\|D e^{n+2}\|^2,  
\end{align*}
where we use the weighted Young inequality four times with positive constants $\alpha,\beta,\gamma,\delta$. 
Rearranging terms leads to
\begin{equation}\label{eq:coeffs}
\begin{aligned}
(4 &- \tau - 2\alpha\omega - \beta\omega)\, \|D e^{n+2}\|^2 + (1-\tfrac2\gamma\omega - \tfrac4\delta\omega)\,\|D^2 e^{n+2}\|^2 
+ 2\tau\, \|D e^{n+2}\|_{\tilde b}^2 
\\&\quad + (10-4\delta)\, \| D e^{n+1}\|_{\tilde c}^2 - (2+\tfrac2\alpha)\, \|D e^{n}\|_{\tilde c}^2  + (2-2\gamma)\, \|D^2 e^{n+1}\|_{\tilde c}^2 - \tfrac1\beta\, \|D^2 e^{n+1}\|_{\tilde c}^2\\
&\quad+ \|D e^{n+2}\|^2 - \|D e^{n+1}\|^2
+ 2\tau\, \| e^{n+2}\|_{\tilde b}^2 - 2\tau\, \| e^{n+1}\|_{\tilde b}^2
\\
&\le 4\tau\, \| \tilde d^{n+2}\|^2. 
\end{aligned}
\end{equation}
We now set $\alpha = 7/8$, $\beta = 11/2$, $\gamma = 10/11$, and $\delta = 10/7$. This leads to 
\begin{align*}
	(4 - \tau - \tfrac{29}{4}\omega)\, \|D &e^{n+2}\|^2 + (1-5\omega)\,\|D^2  e^{n+2}\|^2 
	+ 2\tau\, \|D e^{n+2}\|_{\tilde b}^2 
	\\&\qquad + \tfrac{30}{7}\, \| D e^{n+1}\|_{\tilde c}^2 - \tfrac{30}{7}\, \|D e^{n}\|_{\tilde c}^2  + \tfrac{2}{11}\, \|D^2 e^{n+1}\|_{\tilde c}^2 - \tfrac{2}{11}\, \|D^2 e^{n+1}\|_{\tilde c}^2\\
	&\qquad+ \|D e^{n+2}\|^2 - \|D e^{n+1}\|^2
	+ 2\tau\, \| e^{n+2}\|_{\tilde b}^2 - 2\tau\, \| e^{n+1}\|_{\tilde b}^2
	\\
	&\le 4\tau\, \| \tilde d^{n+2}\|^2.  
\end{align*}
Assuming $\omega\le 1/5$ and $\tau \leq 1$, we therefore get 
\begin{equation*}
\begin{aligned}
  \|D e^{n+2}\|^2 
  + &2\tau\, \|D e^{n+2}\|_{\tilde b}^2 
  + \|D e^{n+2}\|^2 - \|D e^{n+1}\|^2
	+ 2\tau\, \| e^{n+2}\|_{\tilde b}^2 - 2\tau\, \| e^{n+1}\|_{\tilde b}^2\\
	&+ \tfrac{30}{7}\, \|D e^{n+1}\|_{\tilde c}^2 - \tfrac{30}{7}\, \|D e^{n}\|_{\tilde c}^2
    + \tfrac{2}{11}\,\|D^2 e^{n+1}\|_{\tilde c}^2 - \tfrac{2}{11}\|D^2 e^{n}\|^2_{\tilde c} \\&\hspace{5cm}
  \le C\, \tau^5 + C\,\tau\,\|r(t^{n+2}) - \tilde\h^{n+2}\|^2.
\end{aligned}
\end{equation*}
Building the sum over $n$, we get with $e^{-2}=e^{-1}=e^0=0$ and the definition $E_\mathrm{rhs} = \max_{j = 2,\ldots,n} \|r(t^j) - \tilde \h^j\|^2$ that
\begin{equation*}
\begin{aligned}
\sum_{j=2}^n \|D &e^{j}\|^2 + 2\tau \sum_{j=2}^n\|D e^{j}\|_{\tilde b}^2 + \|D e^{n}\|^2 + 2\tau\, \| e^{n}\|_{\tilde b}^2 + \tfrac{30}{7}\,\|D e^{n-1}\|^2_{\tilde c} + \tfrac{2}{11}\,\|D^2 e^{n-1}\|^2_{\tilde c} \\
& \leq  C t^n \tau^4 + Ct^nE_\mathrm{rhs} + \|D e^{1}\|^2 + 2\tau\, \| e^{1}\|_{\tilde b}^2 + \tfrac{30}{7}\,\|D e^{0}\|^2_{\tilde c} + \tfrac{2}{11}\,\|D^2 e^{0}\|^2_{\tilde c} \\
& \leq C t^n \tau^4 + Ct^nE_\mathrm{rhs} 
+ 2\, \|e^{1}\|^2 + 2\tau\, \| e^{1}\|_{\tilde b}^2.
\end{aligned}
\end{equation*}
In particular, we obtain with $E_\mathrm{init}$ introduced in the statement of the theorem that $\sum_{j=2}^n \|D e^{j}\|^2 \le C\, (t^n \tau^4 + t^nE_\mathrm{rhs} + E_\mathrm{init})$. \medskip

{\bf Step 2:} For the desired estimate of the error itself, we go back to~\eqref{eq:BDFerrorRep1}, multiply the equation by 2, and apply $e^{n+2}$. This leads to  
\begin{equation}\label{eq:BDFerrorRep5}
\begin{aligned}
	&\underbrace{2\,\langle \bdf e^{n+2}, e^{n+2}\rangle}_{\eqqcolon\,T_1} 
	+ \underbrace{4\tau\,\langle \calBB e^{n+2}, e^{n+2}\rangle}_{\eqqcolon\,T_2}
	\\&\qquad\qquad+ \underbrace{4\,\langle \calCC\, \bdf e^{n+1}, e^{n+2}\rangle}_{\eqqcolon\,T_3} 
	-\underbrace{2\, \langle\calCC\, \bdf e^{n}, e^{n+2}\rangle}_{\eqqcolon\,T_4}
	= 4\tau\, \langle \tilde d^{n+2}, e^{n+2}\rangle.
\end{aligned}
\end{equation}
With \Cref{lem:symBilinear:k2}, we can rewrite $T_1$ as 
\begin{equation*}
  T_1
  = \bdf \|e^{n+2}\|^2 + 2\, \|D e^{n+2}\|^2 - 2\, \|D e^{n+1}\|^2 + \| D^2 e^{n+2} \|^2. 
\end{equation*}
For the second term, we directly get $T_2 = 4\tau\, \|e^{n+2}\|_{\tilde b}^2$. 
The third term is simplified using $e^{n+2} = e^{n+1} + D e^{n+2}$ and once more \Cref{lem:symBilinear:k2}, leading to 
\begin{align*}
  T_3
  &= 4\, \langle \calCC\, \bdf e^{n+1}, e^{n+1} \rangle
  + 4\, \langle \calCC\, (2D+D^2) e^{n+1}, De^{n+2} \rangle \\
  &= 2\, \bdf \|e^{n+1}\|^2_{\tilde c} + 4\, \|D e^{n+1}\|^2_{\tilde c} - 4\, \|D e^{n}\|^2_{\tilde c} + 2\, \| D^2 e^{n+1} \|^2_{\tilde c} \\
  &\hspace{4cm} + 8\, \langle \calCC D e^{n+1}, De^{n+2} \rangle + 4\, \langle \calCC D^2 e^{n+1}, De^{n+2} \rangle.
\end{align*}
Finally, using $e^{n+2} = D e^{n+2} + D e^{n+1} + e^{n}$ and \Cref{lem:symBilinear:k2}, the last term can be written as 
\begin{align*}
T_4
= \bdf \Vert e^{n} \Vert^2_{\tilde c} + 2\, \Vert D e^{n} \Vert^2_{\tilde c} - 2\, \Vert D e^{n-1} \Vert^2_{\tilde c} + \Vert D^2 e^{n} \Vert^2_{\tilde c}+  2\, \langle \calCC\, \bdf e^{n}, De^{n+2} + De^{n+1}\rangle.
\end{align*}
Using the above expressions, equation~\eqref{eq:BDFerrorRep5} yields 
\begin{align*}
  \bdf \Vert e^{n+2} \Vert^2 &
  + 2\, \Vert D e^{n+2} \Vert^2 - 2\, \Vert D e^{n+1} \Vert^2
  + \Vert D^2 e^{n+2} \Vert^2 
  + 4\tau\, \|e^{n+2}\|_{\tilde b}^2 \\
  &\qquad+ 2\,\bdf \Vert e^{n+1} \Vert^2_{\tilde c} 
  + 4\, \Vert D e^{n+1} \Vert^2_{\tilde c} - 4\, \Vert D e^{n} \Vert^2_{\tilde c} + 2\, \Vert D^2 e^{n+1} \Vert^2_{\tilde c} \\
  &\qquad- \bdf \Vert e^{n} \Vert^2_{\tilde c} - 2\, \Vert D e^{n} \Vert^2_{\tilde c} + 2\, \Vert D e^{n-1} \Vert^2_{\tilde c} - \Vert D^2 e^{n} \Vert^2_{\tilde c} \\
  %
  &\le 4\tau\, \|\tilde d^{n+2}\|\, \|e^{n+2}\| 
  + 8\, \|D e^{n+1}\|_{\tilde c} \|De^{n+2}\|_{\tilde c} + 4\, \|D^2 e^{n+1}\|_{\tilde c} \|De^{n+2}\|_{\tilde c} \\
  &\qquad+ 2\, \|2D e^{n} + D^2e^n\|_{\tilde c} \| D e^{n+2} + D e^{n+1}\|_{\tilde c} \\
  &\le C\tau\, \|\tilde d^{n+2}\|^2 + 2\tau\, \|e^{n+2}\|_{\tilde b}^2 
  + 4\, \|D e^{n+1}\|^2_{\tilde c} + 2\, \|D^2 e^{n+1}\|^2_{\tilde c} + 6\, \|De^{n+2}\|^2_{\tilde c} \\
  &\qquad+ 8\, \|D e^{n}\|^2_{\tilde c} + 2\, \| D^2e^n\|^2_{\tilde c} + 2\, \| D e^{n+2}\|^2_{\tilde c} + 2\, \|D e^{n+1}\|^2_{\tilde c}
\end{align*}
for some constant $C$ that depends on the ellipticity constant of~$\calBB$. With 
\begin{equation*}
2\,\bdf \Vert e^{n+1} \Vert^2_{\tilde c} - \bdf \Vert e^{n} \Vert^2_{\tilde c} = 6\,\Vert e^{n+1} \Vert^2_{\tilde c} -11\, \Vert e^{n} \Vert^2_{\tilde c} + 6\,\Vert e^{n-1} \Vert^2_{\tilde c} - \Vert e^{n-2} \Vert^2_{\tilde c},
\end{equation*}
we get 
%
\begin{align*}
  \bdf& \Vert e^{n+2} \Vert^2 + \Vert D^2 e^{n+2} \Vert^2 + \Vert D^2 e^{n+1} \Vert^2_{\tilde c} + 2\tau\, \|e^{n+2}\|_{\tilde b}^2 \\
  &\qquad + 2\, \Vert D e^{n+2} \Vert^2 - 2\, \Vert D e^{n+1} \Vert^2
  + \Vert D^2 e^{n+1} \Vert^2_{\tilde c} - \Vert D^2 e^{n} \Vert^2_{\tilde c} + 4\, \Vert D e^{n+1} \Vert^2_{\tilde c}  \\
  &\qquad - 6\, \Vert D e^{n} \Vert^2_{\tilde c} + 2\, \Vert D e^{n-1} \Vert^2_{\tilde c} 
  + 6\, \Vert e^{n+1} \Vert^2_{\tilde c} - 11\, \Vert e^{n} \Vert^2_{\tilde c} 
  + 6\, \Vert e^{n-1} \Vert^2_{\tilde c} - \Vert e^{n-2} \Vert^2_{\tilde c} \\
  &\le C\tau\, \|\tilde d^{n+2}\|^2 
  + 8\, \|D e^{n}\|^2_{\tilde c} + 6\, \|D e^{n+1}\|^2_{\tilde c} + 8\, \|D e^{n+2}\|^2_{\tilde c}  \\ 
  &\qquad+ 2\, \| D^2e^n\|^2_{\tilde c} + 2\, \|D^2 e^{n+1}\|^2_{\tilde c} \\
  &\le C\tau\, \|\tilde d^{n+2}\|^2 
  + 4\, \|D e^{n-1}\|^2_{\tilde c} + 16\, \|D e^{n}\|^2_{\tilde c} + 10\, \|D e^{n+1}\|^2_{\tilde c} + 8\, \|D e^{n+2}\|^2_{\tilde c}. 
\end{align*}
%
Dropping the terms $\|D^2e^{n+2}\|^2$ and $\|D^2 e^{n+1}\|_{\tilde c}^2$ on the left-hand side, summing up, and using $e^{-2}=e^{-1}=e^0=0$ and $\omega \leq 1/5$, we obtain
%
\begin{align*}
3\,\|e^{n}&\|^2 - \|e^{n-1}\|^2 + 2\,\|De^{n}\|^2 + \|D^2e^{n-1}\|_{\tilde c}^2 + 4\,\|De^{n-1}\|_{\tilde c}^2 - 2\,\|De^{n-2}\|_{\tilde c}^2 \\&\qquad+ 6\,\|e^{n-1}\|_{\tilde c}^2 - 5\, \|e^{n-2}\|_{\tilde c}^2 + \|e^{n-3}\|_{\tilde c}^2 + 2\tau \sum_{j=2}^n\|e^j\|^2_{\tilde b}\\
& \leq C \tau \sum_{j=2}^n\|\tilde d^{j}\|^2 + \sum_{j=2}^n\Big(4\,\|De^{j-3}\|_{\tilde c}^2 + 16\,\|De^{j-2}\|_{\tilde c}^2 + 10\,\|De^{j-1}\|_{\tilde c}^2 +8\,\|De^{j}\|_{\tilde c}^2\Big) \\
& \qquad + 3\,\|e^1\|^2 - \|e^0\|^2 + 2\,\|De^1\|^2 + \|D^2e^0\|_{\tilde c}^2 + 4\,\|De^0\|_{\tilde c}^2 - 2\,\|De^{-1}\|_{\tilde c}^2 
\\&\qquad+ 6\,\|e^0\|^2 - 5\,\|e^{-1}\|^2 + \|e^{-2}\|^2 \\ 
%
%
& \le C \tau \sum_{j=2}^n\|\tilde d^{j}\|^2 + 38\omega\, \sum_{j=2}^n \|De^{j}\|^2 
+ 3\,\|e^1\|^2 
+ 8\,\|De^1\|^2 \\
%
& \le C \tau \sum_{j=2}^n\|\tilde d^{j}\|^2 + 8 \sum_{j=2}^n \|De^{j}\|^2 
+ 11\,\|e^1\|^2. 
\end{align*}
%
%
Recalling $\tilde d^j = \mathcal{O}(\tau^2) + E_\mathrm{rhs}$ 
and applying the estimate obtained in {\bf Step 1} of this proof, namely $\sum_{j=2}^n \|D e^{j}\|^2 \le C\, (t^n \tau^4 + t^n E_\mathrm{rhs} + E_\mathrm{init})$, we obtain 
\begin{equation*}
\begin{aligned}
3\,\|e^{n}&\|^2 - \|e^{n-1}\|^2 + 2\,\|De^{n}\|^2 + \|D^2e^{n-1}\|_{\tilde c}^2 + 4\,\|De^{n-1}\|_{\tilde c}^2 - 2\,\|De^{n-2}\|_{\tilde c}^2 \\&\qquad+ 6\,\|e^{n-1}\|_{\tilde c}^2 - 5\, \|e^{n-2}\|_{\tilde c}^2 + \|e^{n-3}\|_{\tilde c}^2 + 2\tau \sum_{j=2}^n\|e^j\|^2_{\tilde b}\\
& \leq \tilde C\, ( t^n \tau^4 + t^n E_\mathrm{rhs} + E_\mathrm{init}). 
\end{aligned}
\end{equation*}
Dropping the terms $\|De^{n}\|^2$, $\|D^2e^{n-1}\|_{\tilde c}^2$, and $\|e^{n-3}\|_{\tilde c}^2$ on the left-hand side, we obtain
\begin{equation}\label{eq:BDFerrorRep9}
\begin{aligned}
3\,\|e^{n}&\|^2  + 4\,\|De^{n-1}\|_{\tilde c}^2 + 6\,\|e^{n-1}\|_{\tilde c}^2 + 2\tau \sum_{j=2}^n\|e^j\|^2_{\tilde b}\\
&\leq \|e^{n-1}\|^2 + 2\,\|De^{n-2}\|_{\tilde c}^2 + 5\, \|e^{n-2}\|_{\tilde c}^2 
+ \tilde C\, ( t^n \tau^4 + E_\mathrm{init})  \\
&\leq \frac56\, \Big( 3\,\|e^{n-1}\|^2  + 4\,\|De^{n-2}\|^2_{\tilde c} + 6\,\|e^{n-2}\|_{\tilde c}^2+ 2\tau \sum_{j=2}^{n-1}\|e^j\|^2_{\tilde b} \Big) \\&\hspace{6.5cm}
+ \tilde C\, ( t^n \tau^4 + t^n E_\mathrm{rhs} + E_\mathrm{init})
\end{aligned}
\end{equation}
for all $n\ge 2$. Using the estimate in~\eqref{eq:BDFerrorRep9} multiple times, we get with $\sum_{j=0}^\infty \big(\frac56\big)^j = 6$ that
\begin{align*}
3\,\|e^{n}\|^2  + 4\,\|De^{n-1} \|_{\tilde c}^2 + 6\,\|e^{n-1}\|_{\tilde c}^2 + 2\tau \sum_{j=2}^n\|e^j\|^2_{\tilde b}
\leq 3\,\|e^{1}\|^2  
+ 6\,\tilde C\, ( t^n \tau^4 + t^n E_\mathrm{rhs} + E_\mathrm{init}). 
\end{align*}
Since the first term on the right-hand side can be once again bounded in terms of $E_\mathrm{init}$, this is the assertion.
\end{proof}
\begin{remark}\label{rem:relaxomega}
The choice of the parameters~$\alpha$, $\beta$, $\gamma$, and $\delta$ in~\eqref{eq:coeffs} can be further improved, leading to a relaxed condition on $\omega$. To balance the respective terms, we require that $5-2\delta=1+1/\alpha$, $2-2\gamma=1/\beta$, as well as
\[
	4 - \tau - 2\alpha\omega - \beta \omega > 0, \qquad
	1 - 2/\gamma\omega - 4/\delta\omega \geq 0
\]
for reasonably small values of $\tau$. This restricts possible choices, such that the condition on $\omega$ can only be slightly improved. Nearly optimal values can be obtained by the solution of a constrained optimization problem. As an example, under the (more restrictive) assumption that $\tau \le 1/4$, the choice $\alpha = 15/4$ and $\beta = 15/2$ (and thus $\gamma = 14/15$, $\delta = 28/15$) leads to the improved condition $\omega \le 7/30$. 
\end{remark}
%
%
\section{Numerical Experiments}\label{sec:num}
This section is devoted to the numerical illustration of the convergence result presented in~\Cref{thm:convergence} and the necessity of a weak coupling condition. Moreover, we present a semi-explicit method of order three based on the above construction. 
%
%
\subsection{Poroelastic example}\label{sec:num:poroEx}
In the first experiment, we investigate the convergence rates of the semi-explicit second-order scheme~\eqref{eqn:semiExplicit:order2} and compare the results with an implicit second-order scheme based on a BDF-$2$ discretization. We choose $\Omega = (0,1)^2$, $T=1$, and consider the poroelastic parameters of {\em Charcoal granite} in combination with water (see \Cref{tab:examplesWeakCoupling} or~\cite[Tab.~4]{DetC93}), i.e., we set
\begin{equation*}
\lambda = 2.23\cdot 10^{10}, \quad 
\mu = 1.9\cdot 10^{10}, \quad 
\alpha = 0.27, \quad 
M = 8.5\cdot10^{10},\quad 
\kappa/\nu = 1.0\cdot10^{-19}.
\end{equation*}
Further, the right-hand sides are given by   
\begin{equation*}
f \equiv [\ 1\ \ 2\ ]^T, \qquad 
g(t,x) = 30\,\sin(2\pi\, t\,x_1 + 4\pi\,t) 
\end{equation*}
and the initial condition reads $p^0(x) = 50\,x_1(1-x_1)x_2(1-x_2)$. Accordingly, $u^0$ is defined through the consistency condition~\eqref{eq:poroAbstract:a}, and $p^1, u^1$ by an implicit Euler step as described in \Cref{rem:initDataImplEuler}. 
Note that with the above parameters, it holds that 
\begin{equation*} 
	\omega 
	= \alpha^2M/(\mu+\lambda) 
	\approx 0.15 
	< 1/5
\end{equation*} 
such that the coupling condition in \Cref{thm:convergence} is just fulfilled.  

The computations are based on a finite element implementation in FEniCS, leading to a system as described shortly in \Cref{sec:model:space}. 
We now investigate the convergence behavior of the semi-explicit scheme~\eqref{eqn:semiExplicit:order2} and compare it with a second-order implicit BDF discretization. For the computation of a reference solution, we choose an implicit midpoint scheme with step size $\tau_\mathrm{ref} = 2^{-11}$ and a spatial mesh width~$h_\mathrm{ref} = 2^{-7}$. Since we are mainly interested in the temporal discretization errors, we compute the second-order schemes for step sizes $\tau \in \{2^{-2},\dots,2^{-9}\}$ with the fixed spatial parameter~$h=2^{-7}$. 

The results are presented in \Cref{fig:physicalEx}. Therein, we use the notion $p(T)$ for the reference solution and $p^N_h$ for the discrete solution at time $T=N\tau$ (and accordingly for~$u$). We observe second-order convergence for both the implicit and the semi-explicit scheme. The implicit method, however, achieves slightly better results compared to the semi-explicit one. For comparison, we also included the semi-explicit scheme of first order; see~\eqref{eqn:semiExpl:Ord1}. The main advantage of the semi-explicit scheme lies in the fact that the two poroelastic equations can be solved sequentially, which results in a computational speedup. Moreover, standard preconditioners for elliptic and parabolic systems can be used. Note, however, that the semi-explicit method is only stable if an appropriate coupling condition is fulfilled as indicated in \Cref{thm:convergence}. This is further investigated in the following subsection.

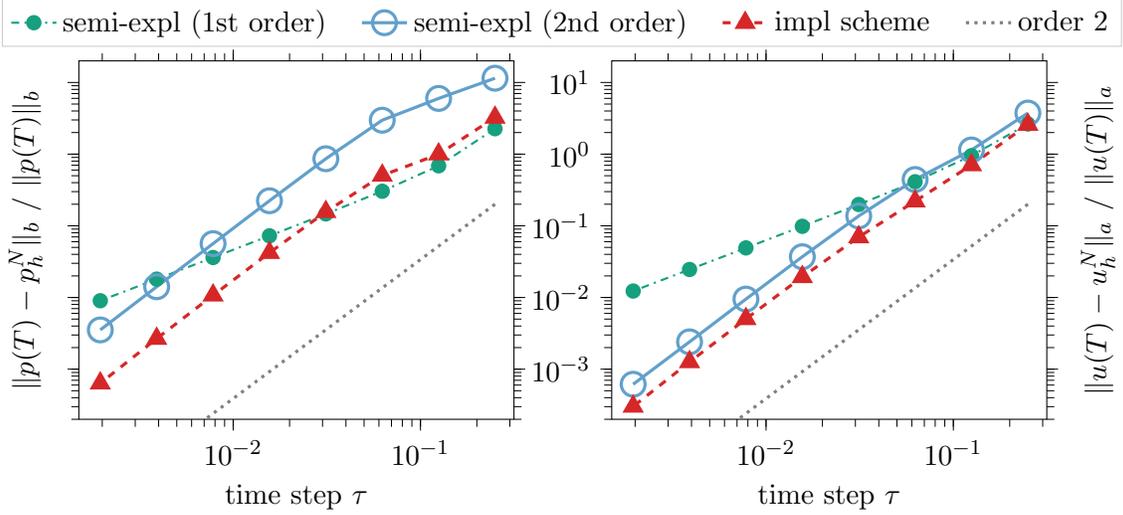
\begin{figure}
	\centering
	\ref{legPoroConvergence}\\ 
\begin{tikzpicture}

\begin{axis}[
width=2.9in,
height=2.5in,
log basis x={10},
log basis y={10},
tick align=outside,
tick pos=both,
x grid style={white!69.01960784313725!black},
xlabel={time step $\tau$},
xmin=0.0015, xmax=0.315572206672458,
xmode=log,
xtick style={color=black},
y grid style={white!69.01960784313725!black},
ylabel={$\|p(T)-p_h^N\|_{b}\  /\ \|p(T)\|_{b}$},
yticklabels = {},
y label style={at={(-.06,0.5)}},
ymin=0.0002, ymax=20,
ymode=log,
ytick style={color=black},
legend cell align={left},
legend columns = 4,
legend style={at={(0.30,1.15)}, anchor=north west, draw=white!80.0!black},
legend to name=legPoroConvergence,
]
\addplot [thick, color0!70!green, mark=*, mark size=2.5, mark options={solid}, dashdotted]
table {%
0.25 2.2622186326998
0.125 0.687117759772629
0.0625 0.303704566966348
0.03125 0.146598164391984
0.015625 0.0725130806070638
0.0078125 0.0361252101591253
0.00390625 0.0180385381887363
0.001953125 0.00901503836499394
};
\addlegendentry{semi-expl (1st order)$\quad$}

\addplot [very thick, color0!70!white, mark=o, mark size=4.5, mark options={solid}]
table {%
0.25 11.4528994939681
0.125 5.98644722401631
0.0625 2.97409972229476
0.03125 0.862683605508286
0.015625 0.223706061851106
0.0078125 0.0564364236965791
0.00390625 0.0141361284869837
0.001953125 0.00353079304104601
};
\addlegendentry{semi-expl (2nd order)$\quad$}

\addplot [very thick, color3, dashed, mark=triangle*, mark size=3.5, mark options={solid}]
table {%
0.25 3.2413210948058
0.125 0.997045457702548
0.0625 0.504237900518704
0.03125 0.156622618940424
0.015625 0.0420791432547027
0.0078125 0.010726360941748
0.00390625 0.00266824191188737
0.001953125 0.0006367208861766
};
\addlegendentry{impl scheme$\quad$}

\addplot [very thick, gray, dotted]
table {%
	0.25 0.2
	0.0019 0.00001552
};
\addlegendentry{order 2}

\end{axis}

\end{tikzpicture}\hspace{-.8em}
\begin{tikzpicture}

\begin{axis}[
width=2.9in,
height=2.5in,
log basis x={10},
log basis y={10},
tick align=outside,
tick pos=both,
x grid style={white!69.01960784313725!black},
xlabel={time step \(\displaystyle \tau\)},
xmin=0.0015, xmax=0.315572206672458,
xmode=log,
xtick style={color=black},
y grid style={white!69.01960784313725!black},
ylabel={$\|u(T)-u_h^N\|_{a}\  /\ \|u(T)\|_{a}$},
y label style={at={(1.18,0.5)}},
ymin=0.0002, ymax=20,
ymode=log,
ytick style={color=black}
]

\addplot [thick, color0!70!green, mark=*, mark size=2.5, mark options={solid}, dashdotted]
table {%
0.25 2.63447344682214
0.125 0.95174740426134
0.0625 0.411826891854543
0.03125 0.198291987483681
0.015625 0.0984189169942012
0.0078125 0.0491768969723484
0.00390625 0.0245980748549917
0.001953125 0.0123027081284418
};

\addplot [very thick, color0!70!white, mark=o, mark size=4.5, mark options={solid}]
table {%
0.25 3.74262067999222
0.125 1.14367190489737
0.0625 0.442328275064346
0.03125 0.137051935060564
0.015625 0.0370378663310186
0.0078125 0.00950290910828976
0.00390625 0.00240626825668776
0.001953125 0.000616245983368695
};

\addplot [very thick, color3, dashed, mark=triangle*, mark size=3.5, mark options={solid}]
table {%
0.25 2.58243009355785
0.125 0.70428654944105
0.0625 0.220435964429632
0.03125 0.0696939137205959
0.015625 0.0193825762054349
0.0078125 0.00502745980093292
0.00390625 0.0012606242063587
0.001953125 0.000301932744171904
};

\addplot [very thick, gray, dotted]
table {%
	0.25 0.2
	0.0019 0.00001552
};
\end{axis}

\end{tikzpicture}
	\caption{Relative errors in $p$ (left, measured in the $b$-norm) and $u$ (right, measured in the $a$-norm) for the poroelastic example in \Cref{sec:num:poroEx} at the final time $T$ for fixed $h = 2^{-8}$ and varying $\tau$.}
	\label{fig:physicalEx}
\end{figure}
%
%
\subsection{Sharpness of the weak coupling condition}\label{sec:num:toyEx}
We now present a numerical example to investigate the requirement of the weak coupling condition in \Cref{thm:convergence}. To this end, we consider the following toy problem of the form~\eqref{eq:poroAbstract} with $\V = \cHV = \R^3$, $\Q = \cHQ = \R^1$ and bilinear forms
\begin{align*}
	a(u,v) = v^TAu,\qquad 
	d(v,p) = \sqrt{\omega}\, p^TDv, \qquad 
	c(p,q) = q^TCp, \qquad 
	b(p,q) = q^TBp
\end{align*} 
with matrices 
\begin{align*}
	A \vcentcolon= 
	\frac{1}{2 - \sqrt{2}}
	\begin{bmatrix}
		\ 2 & -1 & \ 0\\
		-1 &\ 2 & -1\\
		\ 0 & -1 &\ 2
	\end{bmatrix},\qquad 
	D \vcentcolon= \begin{bmatrix} \frac23 & \frac13 & \frac23 \end{bmatrix}, \qquad 
	C \vcentcolon= 1,\qquad 
	B\vcentcolon= 1.
\end{align*}
The prefactor of $A$ is chosen in such a way that $c_a$, which equals the smallest eigenvalue of $A$, is exactly~$1$. Moreover, we have $c_c = 1$ and for the continuity constant of $d$ we get~$C_d = \sqrt{\omega}$. Therefore, we consider as coupling parameter~$\omega = C_d^2 / (c_ac_c)$.

We test our semi-explicit scheme~\eqref{eqn:semiExplicit:order2} with different step sizes~$\tau$ and different coupling coefficients $\omega$. The relative errors compared to a fine discretization with an implicit midpoint rule with step size $\tau_\mathrm{ref} = 2^{-14}$ are computed at the final time $T = 1/2$. For the forcing functions, we choose
\begin{displaymath}
	f \equiv [\ 1\ \ 1\ \ 1\ ]^T\qquad\text{and}\qquad 
	g(t) = \sin(t).
\end{displaymath}
The corresponding results are presented in \Cref{fig:toyEx2}. While the sufficient condition from \Cref{thm:convergence} reads $\omega \leq 1/5$ and the delay approach from \Cref{sec:construction:stability} demands~$\omega < 1/3$, we observe that the critical value for stability is roughly $0.38$ and therefore slightly relaxed compared to the theoretical considerations. The experiment shows that a coupling condition as in \Cref{thm:convergence} is indeed necessary and  -- up to a moderate scaling factor -- rather sharp.  
\begin{figure}[t]
	\centering
	\input{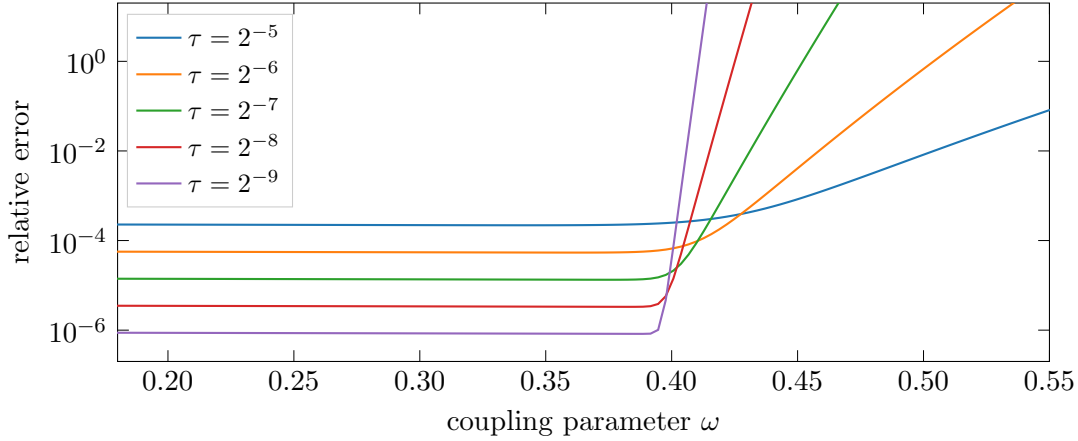}
	\caption{Relative errors of the second-order semi-explicit method at the final time point~$T=1/2$ for different coupling parameters $\omega$ and different time step sizes $\tau$.}
	\label{fig:toyEx2}
\end{figure}
%
%
\subsection{Semi-explicit scheme of order 3}\label{sec:num:order3}
As an outlook, we go beyond the presented theory and motivate a possible extension to a semi-explicit third-order scheme. This is done by using the BDF-$3$ scheme for the delay system in~\eqref{eqn:delay:Taylor}, see the discussion in \Cref{rem:higherOrderBDF}. 
For $k=3$, we have~$p \approx 3 p_\tau - 3 p_{2\tau} + p_{3\tau}$, which 
yields the semi-explicit 3-step scheme
\begin{align*}
\calA u^{n+3} - \calD^* \big(3p^{n+2}-3p^{n+1}+p^n\big) &= f^{n+3}, \\
\calD \tfrac{ 11u^{n+3} - 18u^{n+2} + 9u^{n+1} - 2u^n }{6\tau} + \calC \tfrac{ 11p^{n+3} - 18p^{n+2} + 9p^{n+1} - 2p^n }{6\tau} + \calB p^{n+3} &= g^{n+3}.
\end{align*}	
To illustrate the behavior in terms of the convergence rate and the weak coupling condition, we consider again the setting presented in~\Cref{sec:num:toyEx}. The corresponding results are shown in \Cref{fig:toyEx3}. Note that the error decreases roughly by a factor $8$ when halving the step size~$\tau$, which indicates a third-order convergence rate. As before, we observe that a suitably small coupling of the two equations is necessary in order to ensure stability. The numerically observed critical point for stability is roughly~$\omega \le 1/6$ and hence smaller than in the second-order case of \Cref{fig:toyEx2}. This indicates that the required coupling condition depends on the order~$k$ of the corresponding scheme. Performing a similar analysis of the corresponding delay equation as in \Cref{sec:construction:stability} yields that delay-independent asymptotic stability is (numerically) guaranteed for $\omega < 1/7$.
\begin{figure}[t]
	\centering
	\input{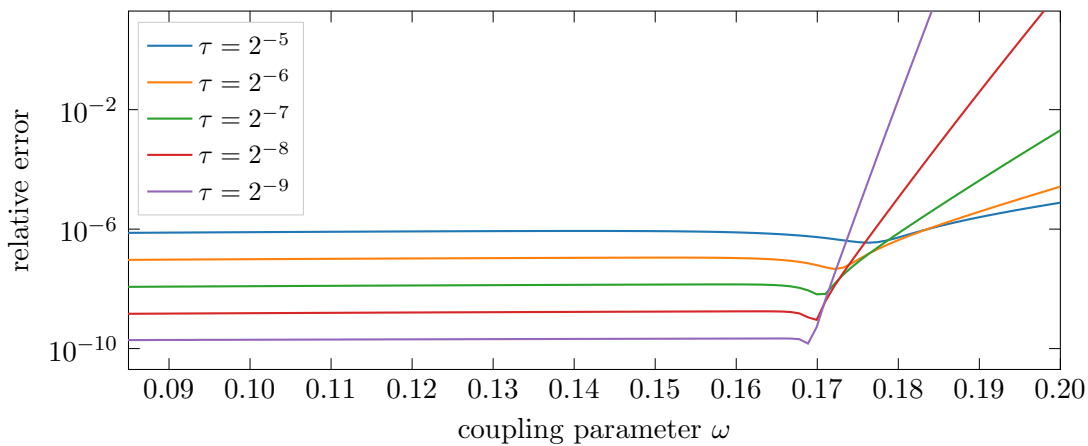}
	\caption{Relative errors of the third-order semi-explicit method at the final time point~$T=1/2$ for different coupling parameters $\omega$ and different time step sizes $\tau$.}
	\label{fig:toyEx3}
\end{figure}
%
%
\section{Conclusions}
Within this paper, we have constructed a semi-explicit second-order time-integration scheme for linear poroelasticity that decouples the problem and hence is suitable in a co-design paradigm where specialized legacy codes for the elliptic and parabolic equation can be used. The method is constructed by first perturbing the elastic equation with time delays, which equal multiples of the time step size, and then applying BDF-$2$ to this delay equation. We have proven convergence of this scheme under a suitable weak coupling condition. This coupling condition is, as in the first-order case~\cite{AltMU21}, explicitly quantified via an asymptotic stability analysis of the delay equation. While our work focuses on the second-order scheme, we have demonstrated in a numerical example that the same idea can also be used to construct a third-order scheme, which however requires a more restrictive weak coupling condition as well as an alternative convergence proof. 
%
%
\section*{Acknowledgments} 
This project is funded by the Deutsche Forschungsgemeinschaft (DFG, German Research Foundation) - 467107679. B.~Unger additionally acknowledges support by the Stuttgart Center for Simulation Science (SimTech). 
Moreover, major parts of this work were carried out while the first author was affiliated with the Institute of Mathematics and the Centre for Advanced Analytics and Predictive Sciences (CAAPS) at the University of Augsburg.
%
%
\bibliographystyle{alpha}
\bibliography{references}
%
%
\appendix 
\section{}\label{appendix}
\begin{proposition}
\label{prop:delayTaylor}
Assume sufficiently smooth right-hand sides~$f$ and~$g$ and a history function~$\bar{\Phi}$ satisfying~\eqref{eqn:historyTaylor} such that the solution $(\bar{u},\bar{p})$ of the delay system \eqref{eqn:delay:Taylor} satisfies~${\bar p}\in W^{k+1,\infty}(\cHQ\!)$. 
Then, the solutions to~\eqref{eq:poroAbstract} and~\eqref{eqn:delay:Taylor} are equal up to a term of order~$\tau^k$, i.e., for almost all~$t\in[0,T]$ we have 
\begin{equation*}
	\Vert \bar p(t) - p(t) \Vert^2_\Q + \Vert \bar u(t) - u(t)\Vert^2_\V 
	\lesssim t\, \tau^{2k}\, \Big[ \Vert \bar{\Phi}\Vert_{W^{k+1,\infty}(-\tau,0;\cHQ\!)}^2 + \Vert {\bar p}\Vert_{W^{k+1,\infty}(\cHQ\!)}^2 \Big]. 
\end{equation*}
\end{proposition}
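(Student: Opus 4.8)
The plan is to transfer everything to the error functions $e_p \coloneqq \bar p - p$ and $e_u \coloneqq \bar u - u$ and to show that the only discrepancy between \eqref{eqn:delay:Taylor} and \eqref{eq:poroAbstract} is a Taylor remainder of order $\tau^k$. Subtracting \eqref{eq:poroAbstract:a} from \eqref{eqn:delay:Taylor:a} and using that ${\textstyle\sum}_{j=0}^{k-1}\tfrac{\tau^j}{j!}\bar p^{(j)}(t-\tau)$ is exactly the order-$k$ Taylor polynomial of $\bar p(t)$ expanded at $t-\tau$, I would write
\begin{equation*}
  a(e_u,v) - d(v,e_p) = -\,d(v, R_k), \qquad R_k(t) \coloneqq \bar p(t) - {\textstyle\sum}_{j=0}^{k-1}\tfrac{\tau^j}{j!}\,\bar p^{(j)}(t-\tau),
\end{equation*}
for all $v\in\V$. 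By the integral form of Taylor's theorem, $R_k(t) = \tfrac{1}{(k-1)!}\int_{t-\tau}^t (t-s)^{k-1}\bar p^{(k)}(s)\ds$, so that $\|R_k(t)\|_\cHQ \lesssim \tau^k$ times a local $L^\infty$-norm of $\bar p^{(k)}$; differentiating the identity once in time gives the analogous relation $a(\dot e_u,v) - d(v,\dot e_p) = -d(v,\dot R_k)$ with $\|\dot R_k(t)\|_\cHQ \lesssim \tau^k$ times a local $L^\infty$-norm of $\bar p^{(k+1)}$. For $t<\tau$ the integration interval reaches into $[-\tau,0]$, which is precisely where the derivatives of the history function $\bar\Phi$ enter; this explains the two contributions on the right-hand side of the claim. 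The conditions \eqref{eqn:historyTaylor} guarantee $R_k(0)=0$, hence $e_p(0)=0$ and $e_u(0)=0$.

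For the energy estimate I would combine the time-differentiated first equation with the difference of \eqref{eqn:delay:Taylor:b} and \eqref{eq:poroAbstract:b}, that is $d(\dot e_u,q)+c(\dot e_p,q)+b(e_p,q)=0$. Testing the differentiated first equation with $v=\dot e_u$ and the parabolic difference with $q=\dot e_p$ and adding the two identities cancels the coupling term $d(\dot e_u,\dot e_p)$ and leaves
\begin{equation*}
  \tfrac12\,\tddt\|e_p\|_b^2 + \|\dot e_u\|_a^2 + \|\dot e_p\|_c^2 = -\,d(\dot e_u,\dot R_k).
\end{equation*}
A weighted Young inequality, together with the continuity of $d$ and the ellipticity of $a$, absorbs $\tfrac12\|\dot e_u\|_a^2$ into the left-hand side and bounds the remainder by $\tfrac{C_d^2}{2c_a}\|\dot R_k\|_\cHQ^2$. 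Integrating in time from $0$ (using $e_p(0)=0$) then yields $\tfrac12\|e_p(t)\|_b^2 \lesssim \int_0^t \|\dot R_k\|_\cHQ^2\ds \lesssim t\,\tau^{2k}$, and the $\Q$-ellipticity of $b$ converts this into the desired bound on $\|e_p(t)\|_\Q^2$. The pleasant feature of testing with $\dot e_p$ is that the coupling term cancels against the elliptic equation, so no Grönwall argument is needed at this stage. Finally, $\|e_u(t)\|_\V$ is recovered algebraically: testing the undifferentiated first equation with $v=e_u$ and using ellipticity of $a$ gives $\|e_u(t)\|_\V \lesssim \|e_p(t)\|_\cHQ + \|R_k(t)\|_\cHQ$, which is already controlled through the embedding $\Q\hookrightarrow\cHQ$; alternatively one integrates the bound on $\|\dot e_u\|_a^2$ and uses $e_u(0)=0$.

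The main obstacle I anticipate is twofold. First, one must justify the regularity needed to test the parabolic equation with $\dot e_p$, i.e. that $\dot{\bar p}$ (and $\dot p$) take values in $\Q$ rather than only in $\cHQ$; this is where the hypothesis of "sufficiently smooth" data must do real work. If only $\cHQ$-valued time derivatives are available, one instead tests the parabolic difference with $q=e_p$ and the first equation with $v=\dot e_u$, which controls $\|e_p(t)\|_\cHQ$ pointwise together with $\int_0^t\|e_p\|_\Q^2\ds$ (after an integration by parts in time in the $d(\dot e_u,R_k)$ term and a Grönwall step), and one then pays for the pointwise $\Q$-norm through the additional estimate on the differentiated system. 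Second, the remainder bounds must be made uniform across the seam at $t=\tau$, carefully splitting the Taylor integral into its contribution from the history interval $[-\tau,0]$ (controlled by $\|\bar\Phi\|_{W^{k+1,\infty}(-\tau,0;\cHQ)}$) and from $[0,T]$ (controlled by $\|\bar p\|_{W^{k+1,\infty}(\cHQ)}$); the remaining steps are a routine application of Young's inequality and of the stated continuity and ellipticity constants.
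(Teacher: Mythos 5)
Your proof is correct, and its core coincides with the paper's argument: you form the error system, identify the defect as the order-$k$ Taylor remainder $R_k$, differentiate the elliptic equation in time, test it with $v=\dot e_u$ and test the parabolic difference with $q=\dot e_p$ so that the coupling terms $d(\dot e_u,\dot e_p)$ cancel, absorb $\tfrac12\Vert\dot e_u\Vert_a^2$ via Young's inequality, and integrate in time --- this is exactly how the paper obtains its key intermediate bound on $\Vert e_p(t)\Vert_\Q^2$ and $\int_0^t\Vert\dot e_u\Vert_\V^2\ds$. Where you genuinely diverge is the recovery of $\Vert e_u\Vert_\V$: you read it off algebraically from the undifferentiated elliptic equation tested with $v=e_u$, giving $\Vert e_u\Vert_\V\lesssim\Vert e_p\Vert_{\cHQ}+\Vert R_k\Vert_{\cHQ}$, whereas the paper runs a second energy identity (undifferentiated elliptic equation with $v=\dot e_u$ plus parabolic equation with $q=e_p$) and integrates, reusing the Step-1 control of $\int_0^t\Vert\dot e_u\Vert_\V^2\ds$. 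Your variant is simpler and yields the pointwise bound without further integration; the only bookkeeping you should add is that $R_k(0)=0$ together with $\Vert\dot R_k\Vert_{\cHQ}\lesssim\tau^k$ gives $\Vert R_k(t)\Vert_{\cHQ}^2\lesssim t^2\tau^{2k}\le T\,t\,\tau^{2k}$, so the factor $t$ in the stated estimate survives. Two further points in your favour: the integral form of the Taylor remainder is the technically correct one for $\cHQ$-valued functions (the paper's Lagrange form with a single intermediate point $\zeta_t$ is not literally valid for vector-valued maps, though harmless there since only $L^\infty$ bounds are invoked), and your explicit caveat about the regularity needed to test with $q=\dot e_p$ --- with the fallback of testing with $q=e_p$ --- makes precise what the paper subsumes under ``sufficiently smooth'' data.
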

\begin{proof}
We define~$e_p \coloneqq \bar p - p$ and~$e_u \coloneqq \bar u - u$, which satisfy~$e_p(0)=0$ and~$e_u(0)=0$ due to the particular choice of the history function, cf.~equation~\eqref{eqn:historyTaylor}. By a Taylor expansion, we know that 
\begin{align*}
	\bar{p}(t)
	&= \sum_{j=0}^{k-1} \tfrac{\tau^{j}}{j!}\, \bar{p}^{(j)}(t-\tau)
	+ \int\limits_{t-\tau}^t \bar{p}^{(k)}(\xi)\tfrac{(t-\xi)^{k-1}}{(k-1)!}\,\mathrm{d}\xi, \\
	\dot{\bar{p}}(t)
	&= \sum_{j=0}^{k-1} \tfrac{\tau^{j}}{j!}\, \bar{p}^{(j+1)}(t-\tau)
	+ \int\limits_{t-\tau}^t \bar{p}^{(k+1)}(\xi)\tfrac{(t-\xi)^{k-1}}{(k-1)!}\,\mathrm{d}\xi.
\end{align*}
With this, the errors satisfy the system 
\begin{subequations}
\label{eqn:lemDelayTwoField}
\begin{align}
	a(e_u,v) - d(v, e_p) 
	&= - \int_{t-\tau}^t \tfrac{(t-\xi)^{k-1}}{(k-1)!}\, d(v, \bar{p}^{(k)}(\xi))\,\mathrm{d}\xi, \label{eqn:lemDelayTwoField:a} \\
	d(\dot e_u, q) + c(\dot e_p,q) + b(e_p,q) 
	&= 0 \label{eqn:lemDelayTwoField:b} 
\end{align}
\end{subequations}
for all test functions~$v\in\V$, $q\in\Q$.  
Moreover, considering the derivatives of~\eqref{eq:poroAbstract:a} and~\eqref{eqn:delay:Taylor:a}, we obtain 
\begin{align}
\label{eqn:lemDelayTwoField:derivative}
	a(\dot e_u,v) - d(v, \dot e_p) 
	= - \int_{t-\tau}^t \tfrac{(t-\xi)^{k-1}}{(k-1)!}\, d(v, \bar{p}^{(k+1)}(\xi))\,\mathrm{d}\xi.
\end{align}
The sum of~\eqref{eqn:lemDelayTwoField:derivative} with test function~$v=\dot e_u$ and~\eqref{eqn:lemDelayTwoField:b} with test function~$q=\dot e_p$, bounding the integral, and an application of Young's inequality yield
\begin{align*}
	\Vert \dot e_u\Vert_a^2 + \Vert \dot e_p\Vert_c^2 + \tfrac 12 \tddt  \Vert e_p \Vert^2_{ b}
	&= - \int_{t-\tau}^t \tfrac{(t-\xi)^{k-1}}{(k-1)!}\, d(\dot e_u, \bar{p}^{(k+1)}(\xi))\,\mathrm{d}\xi \\ 
	&\le \tfrac{\tau^{k}}{(k-1)!}\, C_d\, \Vert \dot{e}_u\Vert_\V  \Vert \bar{p}^{(k+1)} \Vert_{L^\infty(t-\tau,t;\cHQ)} \\
	&\le \tfrac{1}{2}\, \Vert \dot e_u\Vert_a^2 + C\, \tau^{2k}\, \Vert \bar{p}^{(k+1)} \Vert_{L^\infty(t-\tau,t;\cHQ)}^2 
\end{align*}
with the constant~$C=\frac{C_d^2}{2 c_a (\corr{(k-1)}!)^2}$. Note that we use the notion~$\|\bullet \|_{a}$, $\|\bullet \|_{b}$, and~$\|\bullet \|_{c}$ for the norms induced by the bilinear forms $a$, $b$, and $c$, respectively. Hence, we can eliminate $\Vert \dot e_u\Vert_a^2$ on the right-hand side. By the ellipticity of the bilinear forms and an integration over~$[0,t]$ we conclude that 
\begin{align}
\label{eqn:inProofDelay}
	\int_0^t \Vert \dot e_u(s)\Vert_\V^2 \ds + \Vert e_p(t) \Vert_\Q^2
	\lesssim \tau^{2k}\,t\, \Vert \bar{p}^{(k+1)}\Vert_{L^\infty(-\tau,t;\cHQ\!)}^2.
\end{align}
Note that we use here the convention that~$\bar{p}$ equals the history function~$\bar\Phi$ on~$[-\tau, 0]$. 
In the same way, the sum of~\eqref{eqn:lemDelayTwoField:a} with test function~$v=\dot e_u$ and~\eqref{eqn:lemDelayTwoField:b} with test function~$q=e_p$ yields the estimate 
\begin{align*}
	\tfrac 12\tddt \Vert e_u\Vert_a^2 + \tfrac 12\tddt \Vert e_p\Vert_c^2 + \Vert e_p\Vert_{ b}^2
	&= - \int_{t-\tau}^t \tfrac{(t-\xi)^(k-1)}{(k-1)!}\, d(\dot e_u, \bar{p}^{(k)}(\xi))\,\mathrm{d}\xi \\
	&\lesssim \Vert \dot e_u \Vert_\V^2
	+ \tau^{2k}\, \Vert \bar{p}^{(k)}\Vert_{L^\infty(t-\tau,t;\cHQ)}^2. 
\end{align*}
Integration over~$[0,t]$ and the application of estimate~\eqref{eqn:inProofDelay} finally gives 
\begin{equation*}
	\Vert e_u (t)\Vert_\V^2
	\lesssim t\, \tau^{2k}\, \Vert \bar{p}^{(k+1)}\Vert_{L^\infty(-\tau,t;\cHQ\!)}^2, 
\end{equation*}
which completes the proof. 
\end{proof}
\end{document}